\newtheorem{theorem}{Theorem}
\newtheorem{rem}[theorem]{Remark}
\newtheorem{lemma}[theorem]{Lemma}
\newtheorem{proposition}[theorem]{Proposition}
\newcommand{\N}{\mathbb{N}}
\newcommand{\R}{\mathbb{R}}
\numberwithin{theorem}{section}
\numberwithin{equation}{section}
\title[Integro-differential sweeping process]{Lipschitz multivalued perturbations of integro-differential prox-regular sweeping processes}
\author[T. Haddad]{Tahar Haddad}
\address[T. Haddad]
{Laboratoire LMPEA, Facult\'e des Sciences Exactes et Informatique, Universit\'e Mohammed Seddik Benyahia,
Jijel, B.P. 98, Jijel 18000, Alg\'erie.}
\email[Corresponding author]{haddadtr2000@yahoo.fr}
\author[S. Gaouir]{Sarra Gaouir}
\address[S. Gaouir]
{Laboratoire LMPEA, Facult\'e des Sciences Exactes et Informatique, Universit\'e Mohammed Seddik Benyahia,
Jijel, B.P. 98, Jijel 18000, Alg\'erie.}
\email{saragaouir1997@gmail.com}
\author[L. Thibault]{Lionel Thibault}
\address[L. Thibault]
{Universit\'{e} de Montpellier, Institut Montpelli\'{e}rain Alexander Grothendieck 34095 Montpellier CEDEX 5 France.}
\email{lionel.thibault@umontpellier.fr}
\begin{document}
	
	\begin{abstract}
	  Integro-differential sweeping processes with prox-regular sets in Hilbert spaces have been the subject of various recent studies. Diverse applications of such differential inclusions to complementarity problems, electrical circuits, frictionless contact, can be found in the literature. Here we provide a general theorem of existence of solution for such processes perturbed by a Lipschitz multimapping with nonconvex values.
	\end{abstract}
	
	\keywords{Sweeping process, proximal normal, prox-regular set, Lipschitz mapping, Lipschitz multimapping, Volterra integral}
	
	\maketitle
	
	\section{Introduction}
	  The Moreau's sweeping process differential inclusion
\begin{equation}\label{eq-Sweep1}
   -\frac{dx}{dt}(t) \in N_{C(t)}x(t), \quad\text{with}\; x(0)=x_0 \in C(T_0)	
\end{equation}
modeling in mechanics a point $x(t)$ swept by a moving set $C(t)$ (depending on time $t\in I:=[T_0,T]$) of a Hilbert space $H$ has been introduced in \cite{More}. Its well-posedness, i.e. the existence and uniqueness of solution, has been proved in the same paper under the convexity of the set $C(t)$ and under either its absolutely continuous variation (ACV, for short) or its right continuous bounded variation (RCBV, for short). When the sets $C(t)$ are prox-regular (resp. convex), it is generally said that \eqref{eq-Sweep1} is the formulation of the {\em prox-regular (resp. convex) sweeping process}. Under the absolutely continuous variation assumption, the well-posedness in the Hilbert setting of the prox-regular sweeping process has been shown in \cite{Colo-Gonc}. In finite dimensions, perturbations of the ACV prox-regular sweeping process with scalarly upper semicontinuous multimappings with compact convex values were studied in \cite{Thib-JDE}. Under the inclusion of such a  multivalued perturbation into a fixed compact set, the ACV study was extended to Hilbert spaces in \cite{Boun-Thib}; see also \cite{Brog-Tanw,Jour-Vilc,Nacr}. The paper \cite{Edmo-Thib-JDE} developed in Hilbert spaces
 the RCBV case with multivalued perturbations satisfying suitable compact growth conditions; see also \cite{Nacr}. The well-posedness of perturbed ACV prox-regular sweeping processes in Hilbert spaces has been established in \cite{Edmo-Thib-MathProg} for Lipschitz single-valued perturbations and this well-posedness has been extended in \cite{Adly-Nacr-Thib,Nacr-Thib} under the RCBV property of the moving set $C(t)$.

   Recently, the integro-differential prox-regular sweeping process
\begin{equation}\label{eq-Sweep2}
   \begin{cases}
		-\frac{dx}{dt}(t) \in N_{C(t)}x(t) +f_1(t,x(t)) + \int_{T_0}^tf_2(t,s,x(s))\,ds \\
		x(T_0)=x_0 \in C(T_0)
	 \end{cases}
\end{equation}
has been studied in \cite{Bouach_Haddad_Thibault,Boua-Hadd-Thib-JOTA,Boua-Hadd-Thib-COT} with prox-regular sets $C(t)$ of a Hilbert space $H$,  Lipschitz mappings $f_1(t,\cdot)$ and $f_2(t,s,\cdot)$ with values in $H$. Above $\int_{T_0}^tf_2(t,s,x(s))\,ds$ appears as a general integral of Volterra-type. Previously, the situation of the other less general integral $\int_{T_0}^tf_{0,2}(s,x(s))\,ds$ (with $f_1\equiv 0$) was analyzed in
\cite{Bren,Colo-Koza}.  Our objective in the present paper is to consider the situation when the integro-differential prox-regular sweeping process \eqref{eq-Sweep2} is perturbed by a Lipschitz multimapping
$F(t,\cdot)$ with nonconvex values in $H$, say the differential inclusion
\begin{equation}\label{eq-Sweep3}
   \begin{cases}
		-\frac{dx}{dt}(t) \in N_{C(t)} x(t) +f_1(t,x(t)) + \int_{T_0}^tf_2(t,s,x(s))\,ds +F(t,x(t)) \\
		x(T_0)=x_0 \in C(T_0).
	 \end{cases}
\end{equation}
The presence of both the mapping $f_1$ and the multimapping $F$ will be justified in Remark \ref{rem-f1-F}.
After recalling in Section \ref{sect-prel} diverse features on prox-regular sets and measurable multimappings and establishing a needed Gronwall-type inequality,
the proof of existence of an absolutely continuous solution of \eqref{eq-Sweep3} is developed in
Section \ref{sect-main}. In \cite{Mans-Keci-Hadd}  the perturbation of the ACV prox-regular integro-differential sweeping process \eqref{eq-Sweep2} with a maximal monotone operator $A$ was considered instead of the nonconvex valued Lipschitz multimapping $F(t,\cdot)$. The paper \cite{Tolst2017} studied  with such a perturbed multimapping $F(t,\cdot)$ the ACV evolution inclusion
\begin{equation*}
   \begin{cases}
	  -\frac{dx}{dt}(t) \in \partial \varphi_t(x(t))+ F(t,x(t)) \\
		x(T_0)=x_0\in \mathrm{dom}\,\varphi_t,
	 \end{cases}
\end{equation*}
where  $\partial \varphi_t$ is the subdifferential of a time dependent convex function
$\varphi_t: H \to \mathbb{R}\cup\{+\infty\}$ and
$\mathrm{dom}\,\varphi_t:=\{u\in H: \varphi_t(u) <+\infty\}$. The similar ACV problem with a time dependent maximal monotone operator $A(t)$ in place of $\partial \varphi_t$ is examined in \cite{Cast-Said}. Our approach here utilizes some basic ideas in the papers \cite{Fili,Ioff,Tolst2017}.

		\section{Preliminaries}\label{sect-prel}
	
	Let $H$ be a (real) Hilbert space and let $\langle \cdot,\cdot\rangle$ be its inner product and $\|\cdot\|$ its associated norm. The closed ball of $H$ with center $x\in H$ and radius $\delta >0$ is denoted $B[x,\delta]$ while $\mathbb{B}$ stands for the closed unit ball $B[0,1]$ of $H$. The distance function $d_S$ to a subset $S$ of $H$ is given by $d_S(x):=\inf_{y\in S}\|x-y\|$.
For any cone $P$ of $H$ (i.e., $tP \subset P$ for any real $t>0$), the polar of $P$ is defined as
$$
   P^{\circ}:=\{\varsigma \in H: \langle \varsigma, u \rangle \leq 0,\;\forall u\in P\}.
$$
	Let $S$ be a nonempty closed subset of $H$. For $x\in S$, a vector
$\varsigma \in H$ is a {\em proximal normal} to $S$ at $x$ provided that there exists a real $\eta >0$ such that $x\in \mathrm{Proj}_S(x+\eta\varsigma)$, where $\mathrm{Proj}_S(y)$ is the set of all nearest points from $S$ to $y$. The cone $N_S(x)$ of all proximal normals to $S$ at $x$ is called the {\em proximal normal cone} to $S$ at $x$ (see, e.g., \cite{Thib-book1}). For $x\not\in S$, one puts $N_S(x)=\emptyset$.

  When a certain uniformity of $\eta>0$ holds, in the sense that there exists an extended real $R>0$ such that for any $x\in S$ and any $\varsigma\in N_S(x)$ with $\|\varsigma\|\leq 1$
$$
     x\in \mathrm{Proj}_S(x+t\varsigma)		\quad\text{for all}\; t\in ]0,R[,
$$		
one says that the set $S$ is $R$-prox-regular (see, e.g., \cite{Poli-Rock-Thib,Colo-Thib,Thib-book2}). For prox-regular sets all basic normal cones in variational analysis coincide. The closed set $S$ is convex if and only if it is $\infty$-prox-regular. Sets which are $R$-prox-regular are known to enjoy a very long list of fundamental variational regularity properties, for which we refer the reader to,
e.g., \cite{Poli-Rock-Thib,Colo-Thib,Thib-book2}.
	
	One of those variational regularity properties needed in the paper is the so-called hypomonotonicity of the normal cone (see, e.g., \cite{Poli-Rock-Thib,Colo-Thib,Thib-book2}) as stated in the following proposition.

\begin{proposition}\label{prop-Hypo}
Given $R>0$ a nonempty closed set $ S $ in $ H $ is $R$-prox-regular if and only if  for any $ x_{i}\in S $, $ \varsigma_{i}\in N_{S}x_{i} $  with $ i=1,2 $ one has
the hypomonotonicity property
\begin{equation*}
\langle \varsigma_{2}-\varsigma_{1} , x_{2}-x_{1} \rangle\geq -\dfrac{1}{2}\bigg(\dfrac{\lVert \varsigma_{2} \rVert + \lVert \varsigma_{1} \rVert }{R}\bigg)\lVert x_{2}-x_{1} \rVert^{2} .
\end{equation*}
\end{proposition}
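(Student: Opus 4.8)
The plan is to route both implications through the single-point quadratic inequality
\begin{equation*}
\langle \varsigma, y - x\rangle \leq \frac{\lVert\varsigma\rVert}{2R}\,\lVert y-x\rVert^2 \qquad (x \in S,\ \varsigma \in N_S(x),\ y \in S), \tag{$\ast$}
\end{equation*}
which I will first show is equivalent to $R$-prox-regularity, and from which the hypomonotonicity then follows by symmetrization. First I would unwind the definition: for $x \in S$ and $\varsigma \in N_S(x)$ with $\lVert\varsigma\rVert \leq 1$, the membership $x \in \mathrm{Proj}_S(x + t\varsigma)$ means $\lVert t\varsigma\rVert \leq \lVert x + t\varsigma - y\rVert$ for every $y \in S$. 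Squaring and expanding $\lVert x + t\varsigma - y\rVert^2 = \lVert x - y\rVert^2 + 2t\langle\varsigma, x - y\rangle + t^2\lVert\varsigma\rVert^2$, the terms $t^2\lVert\varsigma\rVert^2$ cancel and the projection property becomes exactly $0 \leq \lVert y - x\rVert^2 - 2t\langle\varsigma, y - x\rangle$, i.e. $\langle\varsigma, y - x\rangle \leq \tfrac{1}{2t}\lVert y-x\rVert^2$. Imposing this for all $t \in (0,R)$ and letting $t \uparrow R$ yields $(\ast)$ for unit-norm $\varsigma$; conversely $(\ast)$ with $\lVert\varsigma\rVert \leq 1$ gives $\lVert y-x\rVert^2 - 2t\langle\varsigma, y-x\rangle \geq (1 - t/R)\lVert y-x\rVert^2 \geq 0$ for $t < R$, which is precisely the projection property. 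Positive homogeneity of the cone $N_S(x)$ (rescaling $\varsigma \mapsto \varsigma/\lVert\varsigma\rVert$) then removes the unit-norm restriction and delivers $(\ast)$ in the stated general form.

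Granting $(\ast)$, the forward implication is immediate. I would write $(\ast)$ once with the data $(x_1,\varsigma_1,\,y=x_2)$ and once with $(x_2,\varsigma_2,\,y=x_1)$ and add the two inequalities: the left-hand sides combine into $\langle\varsigma_1-\varsigma_2,\,x_2-x_1\rangle$, the right-hand sides into $\tfrac{\lVert\varsigma_1\rVert+\lVert\varsigma_2\rVert}{2R}\lVert x_2-x_1\rVert^2$, and after multiplying by $-1$ this is exactly the asserted hypomonotonicity.

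For the converse, the key observation is that $0 \in N_S(y)$ for every $y \in S$, so I may feed the hypomonotonicity inequality the admissible pair $x_2 = y$, $\varsigma_2 = 0$ together with an arbitrary $x_1 = x \in S$ and $\varsigma_1 = \varsigma \in N_S(x)$. This collapses the inequality to $-\langle\varsigma, y - x\rangle \geq -\tfrac{\lVert\varsigma\rVert}{2R}\lVert y-x\rVert^2$, that is to $(\ast)$, whereupon the equivalence of the first paragraph restores the defining projection property and hence $R$-prox-regularity.

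The only genuinely delicate point is the first paragraph's back-and-forth between the geometric projection statement and the algebraic inequality $(\ast)$: one must check that the passage $t \uparrow R$ is legitimate (the bound $\langle\varsigma, y-x\rangle \leq \tfrac{1}{2t}\lVert y-x\rVert^2$ holds for every $t<R$, and the sharp constant $\tfrac{1}{2R}$ is its supremal limit), and that the exact cancellation of the $t^2\lVert\varsigma\rVert^2$ terms is what makes the correspondence an equivalence rather than a one-sided estimate. Once $(\ast)$ is secured, both directions are short.
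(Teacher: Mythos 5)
Your proof is correct. Note that the paper itself states Proposition \ref{prop-Hypo} without proof, simply recalling it from \cite{Poli-Rock-Thib,Colo-Thib,Thib-book2}; your argument --- first showing that $R$-prox-regularity is equivalent to the proximal normal inequality $\langle \varsigma, y-x\rangle \leq \frac{\lVert\varsigma\rVert}{2R}\lVert y-x\rVert^2$ for all $x,y\in S$, $\varsigma\in N_S(x)$, then symmetrizing to get hypomonotonicity, and conversely specializing the hypomonotonicity inequality to $\varsigma_2=0$ (legitimate, since $0\in N_S(y)$ for every $y\in S$ by the paper's definition of proximal normals) --- is exactly the standard route taken in those references. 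The delicate points you flag are handled correctly: the exact cancellation of the $t^2\lVert\varsigma\rVert^2$ terms makes the correspondence with the projection property an equivalence, the limit $t\uparrow R$ is legitimate since the bound holds for every $t<R$, and positive homogeneity of the proximal normal cone removes the unit-norm restriction.
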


 Another important property of a prox-regular set $S$ in $H$ is that its proximal normal cone $N_Sx$ at $x\in S$ coincides with its Clarke normal cone $\mathcal{N}_Sx$ as said in part above, see, e.g., \cite{Thib-book1,Thib-book2} for the definition and the property; in particular, $N_Sx$ is closed and convex.
	Further, for any point $x$ in the prox-regular set $S$ it is known that the directional derivative of its distance function $d_S$ at $x$
\begin{equation}\label{eq-DeriDist}
    d'_S(x;u):=\lim_{\tau \downarrow 0}\tau^{-1}d_S(x+\tau{u})
\end{equation}
exists for every $u\in H$ (see, e.g., \cite[Theorem 8.98]{Thib-book2}), so the Bouligand-Peano tangent cone
$T_Sx$ to $S$ at $x$ (see, e.g., \cite{Thib-book1} for the definition) can be written as
$$
	 T_Sx=\{u \in H: d'_S(x;u)=0\},
$$
and hence the coincidence of the proximal normal cone $N_Sx$ with the corresponding Clarke normal cone
along with the inclusions $N_Sx\subset (T_Sx)^{\circ} \subset \mathcal{N}_Sx$ entail that
\begin{equation}\label{eq-NormTang}
      N_Sx=(T_Sx)^{\circ}=\{u\in H: d'_S(x;u)=0\}^{\circ}.
\end{equation}

		Now let us recall the following lemma which is a consequence of Gronwall's lemma.
		
	\begin{lemma}\label{lem-Grown1}
		Let  $I$ be an interval of $\mathbb{R}$ and let $T_0 \in I$. Let $ \rho : I\rightarrow \mathbb{R} $  be a nonnegative locally absolutely continuous function and let $ b_{1}, b_{2}, a : I\rightarrow \mathbb{R}_{+} $ be non-negative locally Lebesgue integrable functions. Assume that
	\begin{equation}\label{23}
	\dot{\rho}(t)\leq a(t)+b_{1}(t)\rho(t)+b_{2}(t)\int_{T_{0}}^{t}\rho(s)\,ds,\hspace{0.3cm} a.e.\,\, t\in I.
	\end{equation}	
		Then for all $ t\in I$, one has
		\begin{equation*}
		\begin{aligned}
		\rho(t)\leq  \rho(T_{0})\,\exp\bigg(\int_{T_{0}}^{t}(b(\tau)+1)\,d\tau\bigg) + \int_{T_{0}}^{t}a(s)\,\exp\bigg(\int_{s}^{t}(b(\tau)+1)\,d\tau\bigg)\,ds,
		\end{aligned}
		\end{equation*}	
		where $ b(t):=\max\{b_{1}(t),b_{2}(t)\} $ for $ t\in I$.
	\end{lemma}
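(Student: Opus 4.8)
The plan is to reduce the integro-differential inequality \eqref{23} to a purely differential Gronwall inequality by introducing a single auxiliary function that simultaneously controls $\rho$ and its running integral. Concretely, I would set
\[
   \sigma(t) := \rho(t) + \int_{T_0}^t \rho(s)\,ds, \qquad t\in I.
\]
Since $\rho$ is nonnegative and locally absolutely continuous and the indefinite integral $t\mapsto \int_{T_0}^t\rho(s)\,ds$ is likewise locally absolutely continuous, $\sigma$ is locally absolutely continuous and nonnegative; moreover $\sigma(T_0)=\rho(T_0)$ and $\dot\sigma(t)=\dot\rho(t)+\rho(t)$ for a.e.\ $t\in I$.

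Next I would estimate $\dot\sigma$. Using \eqref{23} together with $b=\max\{b_1,b_2\}$, for a.e.\ $t\in I$,
\[
   \dot\sigma(t) \le a(t)+b_1(t)\rho(t)+b_2(t)\int_{T_0}^t\rho(s)\,ds+\rho(t) \le a(t)+b(t)\Big(\rho(t)+\int_{T_0}^t\rho(s)\,ds\Big)+\rho(t).
\]
Here I use crucially that $\rho\ge 0$, hence $\int_{T_0}^t\rho\ge 0$, so that replacing the two coefficients $b_1,b_2$ by the larger $b$ only increases the right-hand side. Bounding the trailing term by $\rho(t)\le\sigma(t)$ (again by nonnegativity of the integral) yields the scalar differential inequality
\[
   \dot\sigma(t) \le a(t)+\big(b(t)+1\big)\,\sigma(t), \qquad \text{a.e. } t\in I.
\]

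Finally I would apply the classical differential form of Gronwall's lemma to $\sigma$ with coefficient $b(\cdot)+1$ and forcing term $a(\cdot)$: multiplying by the integrating factor $\exp\big(-\int_{T_0}^t(b+1)\big)$, the function $\sigma(t)\exp\big(-\int_{T_0}^t(b+1)\big)$ has a.e.\ derivative at most $a(t)\exp\big(-\int_{T_0}^t(b+1)\big)$, and integrating from $T_0$ to $t$ gives
\[
   \sigma(t)\le \sigma(T_0)\exp\Big(\int_{T_0}^t(b(\tau)+1)\,d\tau\Big)+\int_{T_0}^t a(s)\exp\Big(\int_s^t(b(\tau)+1)\,d\tau\Big)\,ds.
\]
Since $\sigma(T_0)=\rho(T_0)$ and $\rho(t)\le\sigma(t)$ for every $t\in I$, the asserted estimate follows immediately. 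The one genuinely delicate point is the bookkeeping in the second step: it is precisely the nonnegativity hypotheses on $\rho$, $a$, $b_1$ and $b_2$ that allow the two distinct coefficients and the Volterra-type memory term $\int_{T_0}^t\rho$ to be absorbed into a single first-order differential inequality. Once this absorption is justified, the rest is the standard Gronwall estimate and presents no difficulty.
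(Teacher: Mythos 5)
Your proof is correct, and it is worth noting that the paper itself offers no proof of Lemma \ref{lem-Grown1}: the lemma is simply recalled as ``a consequence of Gronwall's lemma'' and then invoked in the proof of Lemma \ref{lem-Grown2}. Your argument supplies exactly such a derivation, and arguably the most natural one: the substitution $\sigma(t):=\rho(t)+\int_{T_0}^t\rho(s)\,ds$ absorbs the Volterra memory term, the nonnegativity of $\rho$, $a$, $b_1$, $b_2$ justifies replacing $b_1,b_2$ by $b=\max\{b_1,b_2\}$ and bounding $\rho\le\sigma$, and the classical differential form of Gronwall applied to $\dot\sigma(t)\le a(t)+(b(t)+1)\sigma(t)$, together with $\sigma(T_0)=\rho(T_0)$ and $\rho\le\sigma$, yields precisely the stated estimate. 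Your computation also explains the otherwise mysterious $+1$ in the exponent of the paper's statement: it is the price of the extra $\rho(t)$ produced by differentiating the integral part of $\sigma$. One remark on scope: your absorption step uses $\int_{T_0}^t\rho(s)\,ds\ge 0$, hence implicitly $t\ge T_0$; for $t<T_0$ the asserted inequality is false in general (take $a=b_1=b_2=0$ and $\rho$ decreasing, so that $\rho(t)\ge\rho(T_0)$ while the conclusion would force $\rho(t)\le\rho(T_0)$). This is a looseness in the paper's statement (``let $T_0\in I$ \dots\ for all $t\in I$'') rather than a gap in your proof: everywhere the lemma is applied, $I=[T_0,T]$ with $T_0$ the left endpoint, which is exactly the case your argument covers.
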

	
   We will also need the following Gronwall-like inequality.
	
	\begin{lemma}[Gronwall-like differential inequality]\label{lem-Grown2}
		Let $I$  be an interval of $\mathbb{R}$, let $T_0\in I$ and let $ \rho : I\rightarrow \mathbb{R} $  be a non-negative locally absolutely continuous function.  Let $ K_{1},K_{2}, K_{3} : I\rightarrow \mathbb{R}_{+} $ be non-negative locally Lebesgue integrable functions such that
		\begin{equation}\label{5}
		\dot{\rho}(t)\leq  K_{1}(t)\rho(t)+K_{2}(t)\sqrt{\rho(t)}+K_{3}(t)\sqrt{\rho(t)}\int_{T_{0}}^{t}\sqrt{\rho(s)}\,ds, \hspace{0.3cm} a.e.\,\, t\in I.
		\end{equation}	
		Then for all $ t\in I$, one has with
		$ K(t):=\max\bigg\{\dfrac{K_{1}(t)}{2},\dfrac{K_{3}(t)}{2}\bigg\} $
		\begin{equation*}
		\begin{aligned}
		\sqrt{\rho(t)}&\leq  \sqrt{\rho(T_{0})}\,\exp\bigg(\int_{T_{0}}^{t}(K(s)+1)\,ds\bigg) \!+\! \int_{T_{0}}^{t}\dfrac{K_{2}(s)}{2}\exp\bigg(\int_{s}^{t}(K(\tau)+1)\,d\tau\bigg) ds.
		\end{aligned}
		\end{equation*}
	\end{lemma}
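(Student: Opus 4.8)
The plan is to reduce the statement to the already-established Lemma \ref{lem-Grown1} by passing to the square-root variable $u:=\sqrt{\rho}$. The obstruction to doing this directly is that $\sqrt{\rho}$ need not be locally absolutely continuous in a usable way at the points where $\rho$ vanishes, since $x\mapsto\sqrt{x}$ fails to be Lipschitz near the origin and the chain rule is not available there. I would circumvent this by a standard regularization: for each $\varepsilon>0$ set $u_\varepsilon(t):=\sqrt{\rho(t)+\varepsilon}$. Since $x\mapsto\sqrt{x+\varepsilon}$ is $C^1$ and Lipschitz on $[0,\infty)$ (with constant $1/(2\sqrt\varepsilon)$) and $\rho$ is non-negative and locally absolutely continuous, the composition $u_\varepsilon$ is locally absolutely continuous and the chain rule yields $\dot u_\varepsilon(t)=\dot\rho(t)\big/\big(2\sqrt{\rho(t)+\varepsilon}\big)$ for a.e. $t\in I$.

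Next I would insert the differential inequality \eqref{5} into this identity. Because $1/(2\sqrt{\rho+\varepsilon})>0$ and the right-hand side of \eqref{5} is non-negative, the inequality is preserved, and each of the three terms can be estimated separately using $\rho\le\rho+\varepsilon$ and $\sqrt{\rho}\le\sqrt{\rho+\varepsilon}=u_\varepsilon$. Concretely, $\frac{K_1\rho}{2\sqrt{\rho+\varepsilon}}\le\frac{K_1}{2}u_\varepsilon$, $\frac{K_2\sqrt{\rho}}{2\sqrt{\rho+\varepsilon}}\le\frac{K_2}{2}$, and $\frac{K_3\sqrt{\rho}}{2\sqrt{\rho+\varepsilon}}\int_{T_0}^t\sqrt{\rho(s)}\,ds\le\frac{K_3}{2}\int_{T_0}^t u_\varepsilon(s)\,ds$, where in the last term I also used $\sqrt{\rho(s)}\le u_\varepsilon(s)$ inside the integral. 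Bounding $K_1/2$ and $K_3/2$ by $K(t)=\max\{K_1(t)/2,K_3(t)/2\}$ then gives, for a.e. $t\in I$,
$$\dot u_\varepsilon(t)\le K(t)\,u_\varepsilon(t)+\frac{K_2(t)}{2}+K(t)\int_{T_0}^t u_\varepsilon(s)\,ds.$$

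This is exactly hypothesis \eqref{23} of Lemma \ref{lem-Grown1} applied to the non-negative locally absolutely continuous function $u_\varepsilon$ with $a=K_2/2$ and $b_1=b_2=K$ (so that $b=\max\{b_1,b_2\}=K$). Lemma \ref{lem-Grown1} then furnishes, for every $t\in I$,
$$u_\varepsilon(t)\le u_\varepsilon(T_0)\exp\Big(\int_{T_0}^t(K(\tau)+1)\,d\tau\Big)+\int_{T_0}^t\frac{K_2(s)}{2}\exp\Big(\int_s^t(K(\tau)+1)\,d\tau\Big)\,ds.$$
Finally I would let $\varepsilon\downarrow 0$: since $u_\varepsilon(t)=\sqrt{\rho(t)+\varepsilon}\to\sqrt{\rho(t)}$ pointwise and $u_\varepsilon(T_0)\to\sqrt{\rho(T_0)}$, the estimate passes to the limit and produces precisely the claimed bound on $\sqrt{\rho(t)}$. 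The only genuinely delicate point is the regularization justifying the chain rule at the zeros of $\rho$; once $u_\varepsilon$ is in play, the argument is merely a sequence of termwise estimates followed by a direct invocation of Lemma \ref{lem-Grown1} and a passage to the limit.
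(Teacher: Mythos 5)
Your proof is correct and follows essentially the same route as the paper: the paper likewise regularizes with $\rho_{\varepsilon}:=\rho+\varepsilon$ (to justify taking square roots where $\rho$ may vanish), substitutes $\sigma:=\sqrt{\rho_{\varepsilon}}$, divides by $2\sigma$ to obtain the differential inequality \eqref{eq-1star}, and invokes Lemma \ref{lem-Grown1} with $a=K_{2}/2$ and $b_{1}=b_{2}=K$. Your version merely merges the regularization and the substitution into the single function $u_{\varepsilon}=\sqrt{\rho+\varepsilon}$ and spells out the chain-rule and limiting steps that the paper leaves implicit.
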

	\begin{proof}
	    Considering the locally absolutely continuous function $\rho_{\epsilon}(\cdot):=\rho(\cdot) +\varepsilon$, we see that the assumption \eqref{5} is fulfilled with $\rho_{\varepsilon}(\cdot)$ in place of $\rho(\cdot)$. We may then suppose that $\rho(\cdot) >0$ on $I$. Putting $\sigma(t):=\sqrt{\rho(t)}$ for all $t\in I$, the function $\sigma$ is locally absolutely continuous on $I$, and  for a.e. $t\in I$ we have
$$ \dot{\sigma}(t)=\dfrac{\dot{\rho}(t)}{2\sqrt{\rho(t)}}= \dfrac{\dot{\rho}(t)}{2\sigma(t)},
 \quad\text{so}\; 2\sigma(t)\dot{\sigma}(t)= \dot{\rho}(t).
$$
Further,  $ \sigma(t)^{2}=\rho(t)$ for all $t\in I$. Then, from \eqref{5} we have for a.e. $ t\in I $
			\begin{equation*}
			2\sigma(t)\dot{\sigma}(t)\leq K_{1}(t)\sigma(t)^{2}+K_{2}(t)\sigma(t)
			 +K_{3}(t)\sigma(t)\int_{T_{0}}^{t}\sigma(s)\,ds.
			\end{equation*}
			Therefore, for a.e. $t\in I$ we have
			\begin{equation}\label{eq-1star}
			\dot{\sigma}(t)  \leq \dfrac{K_{2}(t)}{2}+\dfrac{K_{1}(t)}{2}\sigma(t)
			+\dfrac{K_{3}(t)}{2}\int_{T_{0}}^{t}\sigma(s)\,ds.
			\end{equation}	
			Letting  $ K(t):=\max\bigg\{\dfrac{K_{1}(t)}{2},\dfrac{K_{3}(t)}{2}\bigg\} $,  $ a(t) :=\dfrac{K_{2}(t)}{2}$,  Lemma \ref{lem-Grown1} furnishes 	 the desired inequality of the lemma.
	\end{proof}
		
		   Given a measurable space $(\mathfrak{T}, \mathcal{T})$, a multimapping
	$\Gamma: \mathfrak{T}\rightrightarrows Y$ from $\mathfrak{T}$ into a complete separable
	metric space $Y$ is called $\mathcal{T}$-measurable if
	$\Gamma^{-1}(U)\in \mathcal{T}$  for every open set $U$ of $Y$, where
	$\Gamma^{-1}(U):=\{x\in H: \Gamma(x)\cap U \neq \emptyset\}$. If $\Gamma$ takes nonempty values, a mapping
	$\sigma: \mathfrak{T}\to Y$ is a selection of $\Gamma$ provided that $\sigma(t)\in \Gamma(t)$ for all
	$t \in \mathfrak{T}$. The set of all $\mathcal{T}$-measurable selections of $\Gamma$ will be denoted
	$\mathrm{Sel}_{\mathcal{T}}\Gamma$. If $\mathcal{T}$ is the Lebesgue $\sigma$-field $\mathcal{L}(\mathfrak{T})$ of a Borel subset $\mathfrak{T}$ of $\mathbb{R}^N$, we will denote $\mathrm{Sel}\,\Gamma$ for short.
	
	     We state in the following theorem some fundamental properties of measurable multimappings, for which we refer to Theorems III.9  and III.30 in \cite{Cast-Vala}. Denoting $\mathcal{B}(Y)$ the Borel $\sigma$-field of $Y$, the assertions (c) and (d) are related to the
$\mathcal{T}\otimes\mathcal{B}(Y)$-measurability of	the graph of $\Gamma$
$$
   \mathrm{gph}\,\Gamma:=\{(t,y)\in\mathfrak{T}\times{Y}: y\in \Gamma(t)\}.
$$

\begin{theorem}\label{theor-measurable}
Let $(\mathfrak{T}, \mathcal{T})$ be a measurable space, let $Y$ be a complete separable metric space, and let $\Gamma: \mathfrak{T}\rightrightarrows Y$ be a multimapping.\\
(a) If $\Gamma$ is closed valued, then $\Gamma$ is $\mathcal{T}$-measurable if and only if for each $y\in Y$ the function $t\mapsto d(y,\Gamma(t))$ is $\mathcal{T}$-measurable.\\
(b) If the values of $\Gamma$ are nonempty and closed in $Y$, then $\Gamma$ is $\mathcal{T}$-mesurable if and only if its admits a Castaing $\mathcal{T}$-representation, i.e. a sequence $(\sigma_n)_{n\in \mathbb{N}}$
 of $\mathcal{T}$-measurable selections of $\Gamma$ such that
$$
    \Gamma(t)=\mathrm{cl}_Y\{\sigma_n(t):n\in \mathbb{N}\}\quad \text{for every}\; t\in \mathfrak{T}.
$$
(c) If the values of $\Gamma$ are closed in $Y$ and if the $\sigma$-field $\mathcal{T}$ is complete with respect to a $\sigma$-finite positive measure, then
$\Gamma$ is $\mathcal{T}$-measurable if and only if its graph $\mathrm{gph}\,\Gamma$ belongs to $\mathcal{T}\otimes\mathcal{B}(Y)$.\\
(d) If the values of $\Gamma$ are nonempty (maybe non-closed), if the $\sigma$-field $\mathcal{T}$ is complete with respect to a $\sigma$-finite positive measure, and if its graph $\mathrm{gph}\,\Gamma$ belongs to $\mathcal{T}\otimes\mathcal{B}(Y)$, then $\Gamma$ admits at least one $\mathcal{T}$-measurable selection.\\
(e) If the $\sigma$-field $\mathcal{T}$ is complete with respect to a $\sigma$-finite positive measure and if $(\Gamma_j)_{j\in J}$ is a countable collection of $\mathcal{T}$-measurable multimappings from
$\mathfrak{T}$ into $Y$ with closed values, the multimapping $t\mapsto \bigcap_{j\in J}\Gamma_j(t)$ is
$\mathcal{T}$-measurable.
\end{theorem}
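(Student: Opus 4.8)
The plan is to treat the five assertions according to their depth: (a) and (b) form the elementary core that uses only the separability of $Y$, (c) and (d) are the genuinely hard statements resting on the measurable projection theorem and requiring completeness, and (e) will fall out as a formal consequence of (c). I would organize everything so that $\mathcal{T}$-measurability is systematically reduced either to the measurability of real-valued distance functions or to membership of the graph in the product $\sigma$-field $\mathcal{T}\otimes\mathcal{B}(Y)$.

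For (a), the forward implication is immediate: for fixed $y\in Y$ and $r>0$ one has $d(y,\Gamma(t))<r$ exactly when $\Gamma(t)\cap B(y,r)\neq\emptyset$, i.e. $t\in\Gamma^{-1}(B(y,r))\in\mathcal{T}$ because $B(y,r)$ is open; hence each map $t\mapsto d(y,\Gamma(t))$ is $\mathcal{T}$-measurable. For the converse I would fix a countable dense set $\{y_n\}$ in $Y$ and write any open $U\subset Y$ as a countable union $U=\bigcup_k B(y_{n_k},r_k)$; then $\Gamma^{-1}(U)=\bigcup_k\{t:d(y_{n_k},\Gamma(t))<r_k\}\in\mathcal{T}$. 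Assertion (b) then reduces to (a): from a Castaing representation $(\sigma_n)$ one gets $d(y,\Gamma(t))=\inf_n\|y-\sigma_n(t)\|$, a countable infimum of measurable functions, so $\Gamma$ is measurable by (a); conversely, the existence of a Castaing representation is the content of the Kuratowski--Ryll-Nardzewski selection theorem, applied on balls around the $y_n$ to exhaust the values of $\Gamma$.

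The core difficulty lies in (c) and (d), and this is where completeness of $\mathcal{T}$ with respect to a $\sigma$-finite measure is indispensable. For (c), the implication ``measurable $\Rightarrow$ graph in $\mathcal{T}\otimes\mathcal{B}(Y)$'' follows from (b), since $\mathrm{gph}\,\Gamma=\bigcap_m\bigcup_n\{(t,y):\|y-\sigma_n(t)\|<1/m\}$ is a Borel-type combination of sets in $\mathcal{T}\otimes\mathcal{B}(Y)$. The reverse implication and the selection statement (d) are the deep part: writing $\Gamma^{-1}(U)=\pi_{\mathfrak{T}}\big(\mathrm{gph}\,\Gamma\cap(\mathfrak{T}\times U)\big)$ exhibits the preimage as a projection of a product-measurable set, and the measurable projection theorem guarantees that such a projection lies in $\mathcal{T}$ once $\mathcal{T}$ is complete for a $\sigma$-finite measure. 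This is the main obstacle, since the naive projection of a set in $\mathcal{T}\otimes\mathcal{B}(Y)$ need not be $\mathcal{T}$-measurable without passing to the completion. Granting this, (d) is the von Neumann--Aumann measurable selection theorem, obtained by a projection-and-selection scheme that successively refines a measurable choice; I would quote it rather than reconstruct the transfinite bookkeeping.

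Finally, (e) is purely formal once (c) is available: since $\mathrm{gph}\big(\bigcap_{j\in J}\Gamma_j\big)=\bigcap_{j\in J}\mathrm{gph}\,\Gamma_j$ and each $\mathrm{gph}\,\Gamma_j\in\mathcal{T}\otimes\mathcal{B}(Y)$ by (c), the countable intersection again belongs to $\mathcal{T}\otimes\mathcal{B}(Y)$; the intersection of closed sets being closed, a second application of (c) yields the $\mathcal{T}$-measurability of $t\mapsto\bigcap_{j\in J}\Gamma_j(t)$.
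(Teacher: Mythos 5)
Your proposal is correct in substance, but there is nothing in the paper to compare it against: the paper does not prove this theorem at all. It is stated as a quotation of Theorems III.9 and III.30 of Castaing and Valadier \cite{Cast-Vala}, and its role in the paper is purely instrumental. Your sketch is essentially a reconstruction of the arguments behind those cited results, and the division of labor you propose is the standard one. Parts (a) and (e) you actually prove: the ball/dense-set computation in (a) is complete, and your observation that under completeness (e) follows from two applications of (c) via $\mathrm{gph}\big(\bigcap_{j}\Gamma_j\big)=\bigcap_{j}\mathrm{gph}\,\Gamma_j$ is the standard shortcut, and is cleaner than the intersection arguments needed when completeness is not assumed. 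Parts (b), (c) and (d), by contrast, are only reduced to named theorems: Kuratowski--Ryll-Nardzewski for the Castaing representation, the measurable projection theorem for the reverse implication in (c), and von Neumann--Aumann for (d); note that for (d) your ``proof'' is a citation of the very statement being proved, so the proposal is not self-contained there --- which is legitimate, and matches the paper's own practice of wholesale citation, but should be acknowledged as such. Two small repairs: in (b) write $d(y,\sigma_n(t))$ rather than $\|y-\sigma_n(t)\|$, since $Y$ is only a metric space; and in the forward direction of (c) your formula built from a Castaing representation presupposes nonempty values, whereas (c) allows $\Gamma(t)=\emptyset$, so either restrict to the $\mathcal{T}$-measurable set $\Gamma^{-1}(Y)$ (the graph over its complement being empty), or argue directly that $\mathrm{gph}\,\Gamma=\{(t,y)\in\mathfrak{T}\times Y: d(y,\Gamma(t))=0\}$, the function $(t,y)\mapsto d(y,\Gamma(t))$ being $\mathcal{T}$-measurable in $t$ by (a) and Lipschitz in $y$, hence $\mathcal{T}\otimes\mathcal{B}(Y)$-measurable.
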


Let $\Gamma_1$ and $\Gamma_2$ be two multimappings from $\mathfrak{T}$ into $Y$. Assume that the $\sigma$-field  $\mathcal{T}$ is complete with respect to a $\sigma$-finite positive measure, that $\Gamma_1$ is $\mathcal{T}$-measurable and closed valued, and that the graph of the multimapping $\Gamma_2$ belongs to $\mathcal{T}\otimes\mathcal{B}(Y)$. Then for any $\mathcal{T}$-measurable selection $z(\cdot)$ of $\Gamma(\cdot):=\Gamma_1(\cdot)+\Gamma_2(\cdot)$, the multimapping $\Gamma_3(\cdot):=z(\cdot)-\Gamma_1(\cdot)$ is
$\mathcal{T}$-measurable (by Theorem \ref{theor-measurable}(b)), hence the graph of $t\mapsto \Gamma_2(t)\cap \Gamma_3(t)$ is
$\mathcal{T}\otimes\mathcal{B}(Y)$ measurable (by Theorem \ref{theor-measurable}(f)). Therefore, by Theorem \ref{theor-measurable}(d) there exists a $\mathcal{T}$-measurable selection
$z_2(\cdot)$ of $t\mapsto \Gamma_2(t)\cap\Gamma_3(t)$. The mapping $z_1(\cdot):=z(\cdot)-z_2(\cdot)$ is a
$\mathcal{T}$-measurable selection of $\Gamma_1$, and $z(\cdot)=z_1(\cdot)+z_2(\cdot)$. This can be translated as follows:

\begin{proposition}\label{prop-measurable-sum}
Let $(\mathfrak{T},\mathcal{T})$ be a measurable space with  $\mathcal{T}$ complete with respect to a $\sigma$-finite positive measure. Let $\Gamma_1:\mathfrak{T}\rightrightarrows Y$ be a $\mathcal{T}$-measurable multimapping with nonempty closed values and $\Gamma_2:\mathfrak{T}\rightrightarrows Y$ be a multimapping
with nonempty values whose graph $\mathrm{gph}\,\Gamma_2$ is $\mathcal{T}\otimes\mathcal{B}(Y)$ measurable.
Then for any $\mathcal{T}$-measurable selection $z(\cdot)$ of $\Gamma_1(\cdot)+\Gamma_2(\cdot)$ there exist
$\mathcal{T}$-measurable selections $z_1(\cdot)$ and $z_2(\cdot)$ of $\Gamma_1$ and $\Gamma_2$ respectively, such that
$$
    z(t)=z_1(t)+z_2(t) \quad\text{for all}\; t\in \mathfrak{T}.
$$
\end{proposition}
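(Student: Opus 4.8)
The plan is to recast the desired additive splitting $z=z_1+z_2$ as the problem of finding a single measurable selection of a suitable intersection multimapping, after which the von Neumann--Aumann type selection results collected in Theorem \ref{theor-measurable} finish the job. Concretely, I would introduce the residual multimapping
$$\Gamma_3(t) := z(t) - \Gamma_1(t) = \{z(t) - a : a \in \Gamma_1(t)\}, \qquad t \in \mathfrak{T},$$
and seek a $\mathcal{T}$-measurable selection $z_2(\cdot)$ of $t \mapsto \Gamma_2(t) \cap \Gamma_3(t)$. The point of this choice is that a selection of the intersection delivers both pieces at once: if $z_2(t) \in \Gamma_3(t)$, then $z(t) - z_2(t) \in \Gamma_1(t)$, so $z_1 := z - z_2$ is automatically a selection of $\Gamma_1$, while $z_2(t) \in \Gamma_2(t)$ and $z = z_1 + z_2$ hold by construction.

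To apply the selection theorem I must verify that $t \mapsto \Gamma_2(t) \cap \Gamma_3(t)$ has nonempty values and a $\mathcal{T}\otimes\mathcal{B}(Y)$-measurable graph. Nonemptiness is immediate from the hypothesis on $z$: writing $z(t) = a + b$ with $a \in \Gamma_1(t)$ and $b \in \Gamma_2(t)$, one sees $b = z(t) - a \in \Gamma_3(t)$, whence $b \in \Gamma_2(t) \cap \Gamma_3(t)$. For the graph, I would first observe that $\Gamma_3$ is $\mathcal{T}$-measurable with closed values: its values are closed as translates of the closed sets $\Gamma_1(t)$, and if $(\sigma_n)$ is a Castaing representation of $\Gamma_1$ furnished by Theorem \ref{theor-measurable}(b), then $(z - \sigma_n)$ is a sequence of measurable selections of $\Gamma_3$ with $\Gamma_3(t) = \mathrm{cl}_Y\{z(t) - \sigma_n(t) : n \in \mathbb{N}\}$ (translation being a homeomorphism, it commutes with closure), i.e. a Castaing representation of $\Gamma_3$, so $\Gamma_3$ is measurable again by Theorem \ref{theor-measurable}(b). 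Theorem \ref{theor-measurable}(c) then yields $\mathrm{gph}\,\Gamma_3 \in \mathcal{T}\otimes\mathcal{B}(Y)$, and since $\mathrm{gph}(\Gamma_2 \cap \Gamma_3) = \mathrm{gph}\,\Gamma_2 \cap \mathrm{gph}\,\Gamma_3$ is the intersection of two members of the product $\sigma$-field, it too lies in $\mathcal{T}\otimes\mathcal{B}(Y)$.

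Having checked both conditions, Theorem \ref{theor-measurable}(d), whose completeness and $\sigma$-finiteness hypotheses are exactly the standing assumptions here, produces the sought $\mathcal{T}$-measurable selection $z_2(\cdot)$ of $t \mapsto \Gamma_2(t) \cap \Gamma_3(t)$, and setting $z_1 := z - z_2$ completes the proof along the lines of the first paragraph. I do not anticipate a genuine obstacle beyond the bookkeeping: the one step requiring care is confirming the measurable-graph property of the intersection, and this is precisely where the passage through $\Gamma_3$ pays off, since it reduces the whole question to the measurability of a translate of $\Gamma_1$ together with the elementary identity $\mathrm{gph}(\Gamma_2 \cap \Gamma_3) = \mathrm{gph}\,\Gamma_2 \cap \mathrm{gph}\,\Gamma_3$.
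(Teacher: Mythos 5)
Your proposal is correct and follows essentially the same route as the paper: the paper likewise introduces $\Gamma_3(\cdot):=z(\cdot)-\Gamma_1(\cdot)$, deduces its $\mathcal{T}$-measurability via the Castaing representation of Theorem \ref{theor-measurable}(b), passes to graph measurability of $t\mapsto\Gamma_2(t)\cap\Gamma_3(t)$, extracts a measurable selection $z_2$ by Theorem \ref{theor-measurable}(d), and sets $z_1:=z-z_2$. If anything, your write-up is slightly more complete, since you explicitly verify the nonemptiness of the intersection and route the graph-measurability step through Theorem \ref{theor-measurable}(c), whereas the paper cites a nonexistent part (f) at that point.
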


     Keep $(\mathfrak{T}, \mathcal{T})$ as a measurable space and suppose that
	$\mathcal{T}$ is complete with respect to  a $\sigma$-finite positive measure. Take a measurable multimapping $S: \mathfrak{T} \rightrightarrows H$ from $\mathfrak{T}$ into a separable Hilbert space $H$  such that all the values $S(t)$ are nonempty and prox-regular. Fix $x(\cdot): \mathfrak{T}\to H$ as a $\mathcal{T}$-measurable selection of $S(\cdot)$. For each $u \in H$ by \eqref{eq-DeriDist} one has
$$
    d'_{S(t)}(x(t);u)=\lim_{n\uparrow \infty}n\,d_{S(t)}\left(x(t)+\frac{1}{n}u \right),
$$
so $d_{S(t)}(y)$ being continuous in $y$ and $\mathcal{T}$-measurable in $t$ by Theorem \ref{theor-measurable}(a), we see that $d'_{S(t)}(x(t);u)$ is $\mathcal{T}$-measurable in $t$ and it is also
(Lipschitz) continuous in $u$. So, the function $(t,u)\mapsto d'_{S(t)}(x(t);u)$ is
$\mathcal{T}\otimes\mathcal{B}(H)$-measurable, which ensures that the multimapping $\Gamma: \mathfrak{T}\rightrightarrows H$,  with $\Gamma(t):=T_{S(t)}x(t)$, has its graph
$$
   \mathrm{gph}\,\Gamma=\{(t,u)\in \mathfrak{T}\times H: d'_{S(t)}(x(t);u)=0\}
$$
$\mathcal{T}\otimes\mathcal{B}(H)$-measurable. Theorem \ref{theor-measurable}(c) implies that the multimapping $\Gamma(\cdot)$ is $\mathcal{T}$-measurable. Theorem \ref{theor-measurable}(b) furnishes
a Castaing representation of $\Gamma$, i.e. a sequence $(\sigma_n)_{n\in \N}$ of $\mathcal{T}$-measurable mappings from $\mathfrak{T}$ into $H$ such that for each $t\in \mathfrak{T}$
$$
  \Gamma(t)=\mathrm{cl}_Y\{\sigma_n(t):n\in \mathbb{N}\},
$$
which gives by \eqref{eq-NormTang}
$$
   N_{S(t)}x(t)=\bigcap_{n\in \N}\{\varsigma\in H: \langle \varsigma, \sigma_n(t)\rangle \leq 0\}.
$$
Since each multimapping $t\mapsto \{\varsigma\in H: \langle \varsigma, \sigma_n(t)\rangle \leq 0\}$
is $\mathcal{T}$-measurable according to the $\mathcal{T}\otimes \mathcal{B}(H)$-measurability of
its graph and to Theorem \ref{theor-measurable}(c), we obtain that the multimapping
\begin{equation}\label{eq-MeasNorm}
    t \mapsto N_{S(t)}x(t)\;\,\text{is}\; \mathcal{T}-\text{measurable}
\end{equation}
according to Theorem \ref{theor-measurable}(e).

	\section{Main Results}\label{sect-main}
	Throughout the rest of the paper, $H$ will be a (real) separable Hilbert space. Given the interval $I:=[T_0,T]$, we consider the following nonconvex integro-differential sweeping process with initial condition
	$x_0\in C(T_0)$
	\begin{equation*}
	\!  \! 	
	\mathcal{P}_{f_{1},f_{2}}(x_0,F)\! \!  \begin{cases} -\dot{x}(t)\!\in\! N_{C(t)}x(t)\!+\!f_{1}(t,x(t))\!+\!\!
	\int_{T_{0}}^{t}\!f_{2}(t,s, x(s))ds\!+\!F(t, x(t)) \, \text{a.e.}\, t\!\in\! I\\
	x(T_0)=x_{0},
	\end{cases}
	\end{equation*}
		where $f_1: I\times H \to H$ and $f_2:I\times I\times H \to H$ are given mappings and
	$C: I \rightrightarrows H$ and $F:I\times H \rightrightarrows H$ are given multimappings.
	
	  Our objective in this section is to state and establish the main result of existence of solutions for the integro-differential sweeping process  $\big(\mathcal{P}_{f_{1},f_{2}}(x_0,F)\big)$ under appropriate hypotheses. As usual, given a closed bounded set $P$ in $\mathbb{R}^N$, the Lebesgue measure on $P$ will be denoted $\lambda$ and $L^{1}(P,H)$ will be the space of Bochner $\lambda$-integrable mappings from $P$ into $H$. The
	integral of any mapping on $P$ with values in $H$ in the paper will be the Bochner integral with respect
	to the Lebesgue measure. Similarly, $\mathcal{C}(P,H)$ will be the space of continuous mappings from $P$  into $H$ endowed with the norm $\|\cdot\|_{\infty}$ of uniform convergence on $P$.
	
	\subsection{Hypotheses on the data}\label{subsect-HypoData}
	
	In this subsection we collect the basic hypotheses that will be used throughout the paper. We keep $H$ as a separable real Hilbert space  and we consider the interval $I=[T_0, T]$. As in Section \ref{sect-prel} we denote by $B[x,\eta]$ the closed ball of $H$ with center $x$ and radius $\eta >0$ and by $\mathbb{B}$ the closed unit ball of $H$ centered at the origin. It will be convenient to put $\mathbb{R}_+:=[0,+\infty[$. \vspace{0,4cm}
	\par \textit{\textbf{$\mathcal{H}(C)$.}} We will take $ C: I\rightrightarrows H$ as a multimapping with nonempty and closed values, and  we will consider the following conditions :
	\begin{enumerate}
		\item [$(\mathcal{H}^C_{1})$] $C(t)$ varies in an absolutely continuous way, i.e. there exists an absolutely
		continuous function $\upsilon : I \rightarrow \mathbb{R}$, which is monotone increasing and
		$$
		d(y,C(t)) \leq d(y,C(s)) +|\upsilon(t)-\upsilon(s)| \;\text{for all}\;y\in H\;\text{and}\; s,t\in I,
		$$
		that is, if the sets $C(t)$ are bounded
		\begin{equation*}
		\mathrm{haus}(C(t), C(s)) \leq |\upsilon(t)-\upsilon(s)|\quad \text{for all}\; s, t\in I,
		\end{equation*}
		where $\mathrm{haus}(\cdot,\cdot)$ denotes the classical Hausdorff-Pompeiu distance;
		\item [$(\mathcal{H}^C_{2})$] there exists $R >0$ such that $C(t)$ is $R$-prox-regular for each $t\in I$.
	\end{enumerate}\vspace{0,4cm}
	\par \textit{\textbf{$\mathcal{H}(F)$.}} The multimapping $ F: I\times H\rightrightarrows H$ will be  with nonempty closed values and the following conditions will be required:
	\begin{enumerate}
		\item [$(\mathcal{H}_{0}^F)$] For every $x\in H$ $F(\cdot, x)$ is Lebesgue-measurable;
		\item [$(\mathcal{H}_{1}^F)$] there exists a non-negative function $k(\cdot)\in L^1(I,\mathbb{R}_+)$ satisfying the Lipschitz property
\begin{equation}\label{eq-Lips-F}
  F(t,y) \subset F(t,x) +k(t)\|x-y\|\mathbb{B} \quad\text{for all}\;t\in I\;\text{and}\; x,y\in H;
\end{equation}
		\item [$(\mathcal{H}_{2}^F)$] there exists a non-negative function $\gamma(\cdot)\in L^1(I,\mathbb{R}_+)$ with $F(t,x) \subset \gamma(t)\mathbb{B}$ for all $t\in I$ and $x\in H$.
	\end{enumerate}
	\vspace{0,4cm}
	\par \textit{\textbf{ $\mathcal{H}$($f_1$)}. } For the single-valued mapping $f_1: I\times H\rightarrow H$  we will consider the following conditions :
	\begin{enumerate}
		\item [$(\mathcal{H}_{0}^{f_1})$]$ f_{1} : I\times H \rightarrow H $ is Lebesgue measurable in time;
		\item [$(\mathcal{H}_{1}^{f_1})$]	there exists a non-negative function $ \beta_{1}(\cdot)\in L^{1}(I,\mathbb{R}_+) $ such that
		\begin{equation*}
		\lVert f_{1}(t,x)\rVert\leq \beta_{1}(t)(1+\lVert x \rVert) \,\,\,\text{for all}\;t\in
		I\; \text{and} \; x\in H;
		\end{equation*}
		\item [$(\mathcal{H}_{2}^{f_1})$] for each real $ \eta>0 $ there exists a non-negative function $L_{1}^{\eta}(\cdot)$ $\lambda$-integrable on $I$ and
		such that for any $ t\in I$ and for any $ (x,y)\in {B}[0,\eta]\times {B}[0,\eta] $,
		\begin{equation*}
		\lVert f_{1}(t,x)-f_{1}(t,y) \rVert\leq L_{1}^{\eta}(t)\lVert x-y \rVert.
		\end{equation*}	
	\end{enumerate}\vspace{0,4cm}
	\par \textit{\textbf{ $\mathcal{H}$($f_2$)}. } For the single-valued mapping $f_2: I^2\times H\rightarrow H$   we will consider the following conditions :
	\begin{enumerate}
		\item [$(\mathcal{H}_{0}^{f_2})$] $f_2(\cdot,\cdot,x)$ is $\mathcal{L}(I)\otimes\mathcal{L}(I)$  measurable on $I^2$ for each $x\in H$;
		\item [$(\mathcal{H}_{1}^{f_2})$] there exist non-negative  functions $ \beta_2(\cdot)\in L^{1}(I,\mathbb{R}_+) $ and  $ g(\cdot)\in L^{1}(Q_{\Delta},\mathbb{R}_+) $ such that
		\begin{equation*}
		\lVert f_{2}(t,s,x) \rVert\leq  g(t,s) +\beta_2(t)\lVert x \rVert \,\,\,\text{for any}\,\, (t,s)\in Q_{\Delta}\,\,\,\text{and any}\,\,\,x\in\underset{t\in I}\bigcup C(t).
		\end{equation*}
		\item [$(\mathcal{H}_{2}^{f_2})$] for each real $ \eta>0 $ there exists a non-negative function $L_{2}^{\eta}(\cdot)$ $\lambda$-integrable on $I$ and
		such that for all $ (t,s)\in Q_{\Delta} $ and $ (x,y)\in {B}[0,\eta]\times {B}[0,\eta]$,
		\begin{equation*}
		\lVert f_2(t,s,x)-f_2(t,s,y) \rVert\leq L_{2}^{\eta}(t)\lVert x-y \rVert.
		\end{equation*}	
		Above $Q_{\Delta}$ stands for the set
		$$
		Q_{\Delta}:=\{(t,s)\in I^{2}: s \leq t\}.
		$$
	\end{enumerate}
	
\begin{rem}\label{rem-Haus-Growth}
\em{
 (a)  It is worth noticing that, on the one hand, the condition $(\mathcal{H}_1^{F})$ readily ensures that for any $t\in I$ and any $x,y \in H$
$$
    |d_{F(t,x)}(w)-d_{F(t,y)}(w)| \leq k(t)\|x-y\| \quad\text{for every}\; w\in H,
$$
or equivalently
$$
   \mathrm{haus}(F(t,x),F(t,y))\leq k(t)\|x-y\|,
$$
and that, on the other hand, the latter condition entails with any $\varepsilon >0$ and $k_{\varepsilon}(t)=k(t)+\varepsilon$
$$
   F(t,y) \subset F(t,x) +k_{\varepsilon}(t)\|x-y\|\mathbb{B}.
$$
(b) It is also worth noticing that the condition $(\mathcal{H}_2^{F})$ gives $d(0,F(t,x))\leq \gamma(t)$
for all $t\in I$ and $x\in H$.}
\hfill $\square$
\end{rem}	

\subsection{Existence theorem}\label{subsect-ExisTheor}

    Before proceeding with the development of the proof of existence of solution of $\left(\mathcal{P}_{f_1,f_2}(x_0,F)\right)$, it is worth noticing the following. Assume that $x(\cdot):I \to H$ is an absolutely continuous solution of $\left(\mathcal{P}_{f_1,f_2}(x_0,F)\right)$ and that hypotheses $(\mathcal{H}(C))$, $(\mathcal{H}(F))$, $(\mathcal{H}(f_1))$ and $(\mathcal{H}(f_2))$ are satisfied. Observing that $(s,t)\mapsto f_2(t,s,x(s))$ is $\mathcal{L}(I)\otimes\mathcal{L}(I)$ measurable (by continuity of $x(\cdot)$) and writing
$$
   \int_{T_0}^tf_2(t,s,x(s))\,ds=\int_{T_0}^T\mathbf{1}_{Q_{\Delta}}(t,s)f_2(t,s,x(s)),
$$
the Fubini theorem ensures that the mapping $I\ni t\mapsto \int_{T_0}^tf_2(t,s,x(s))\,ds$ is $\mathcal{L}(I)$-measurable; above the function $\mathbf{1}_{Q_{\Delta}}$ is defined on $I\times{I}$ by $\mathbf{1}_{Q_{\Delta}}(t,s)=1$ if $(t,s)\in Q_{\Delta}$ and
$\mathbf{1}_{Q_{\Delta}}(t,s)=0$ otherwise.  This, \eqref{eq-MeasNorm} and Theorem \ref{theor-measurable}(b) tell us that the multimapping
$$
     I \ni t \mapsto M(t):=N_{C(t)}x(t) + f_1(t,x(t)) + \int_{T_0}^tf_2(t,s,x(s))\,ds
$$ 		
is $\mathcal{L}(I)$-measurable. On the other hand, for any $w\in H$ putting
$\delta_{w}(t,u):=d_{F(t,u)}(w)$ we have that $\delta_w(\cdot,u)$ is $\mathcal{L}(I)$-measurable by $(\mathcal{H}_0^F)$ and by Theorem \ref{theor-measurable}(a), and that $\delta_w(t,\cdot)$ is (Lipschitz) continuous on
$H$. Therefore, the function $I \ni t\mapsto d_{F(t,x(t))}(w)$ is $\mathcal{L}(I)$-measurable for every $w\in H$, so Theorem \ref{theor-measurable}(a) ensures that the multimapping $I\ni t\mapsto F(t,x(t))$ is $\mathcal{L}(I)$-measurable. The inclusion $-\dot{x}(t)\in M(t)+F(t,x(t))$ and both measurability properties above easily give by Proposition \ref{prop-measurable-sum} that there is an $\mathcal{L}(I)$-measurable mapping $z:I \to H$ with
$z(t) \in F(t,x(t))$ for every $t\in I$ and such that $-\dot{x}(t)\in M(t) + z(t)$ a.e. $t\in I$. So, for such a measurable selection $z(\cdot)$ of $t\mapsto F(t,x(t))$ we have
\begin{equation}\label{eq-ConsSol}
   \begin{cases}
		-\dot{x}(t) \in N_{C(t)}x(t) +f_1(t,x(t))+\int_{T_0}^tf_2(t,s,x(s))ds + z(t)\;\,\text{a.e.}\; t\in I\\
		x(T_0)=x_0,
	 \end{cases}
\end{equation}
and we observe by $(\mathcal{H}^F_2)$ that $z(t) \in \Gamma_{\gamma}(t):=\gamma(t)\mathbb{B}$.

    Taking the latter observation into account, we start with a result concerning differential inclusions where  suitable measurable selections
$z(\cdot)$ of $\Gamma_{\gamma}$ are involved instead of the multimapping $F(\cdot,\cdot)$.

\begin{theorem}\label{theor-Translate}
	Let $I:=[T_0,T]$ and let hypotheses $\mathcal{H}(C)$, $\mathcal{H}(f_1)$, $\mathcal{H}(f_2)$ hold. Let
	$\Gamma_{\gamma}(t):=\gamma(t)\mathbb{B}$ for all $t\in I$, where $\gamma(\cdot)$ is given in ($\mathcal{H}^F_2$).\\
(a)	For any $z\in \mathrm{Sel}\,\Gamma_{\gamma}$, for any initial point $x_{0} \in H$ with $x_{0}\in C(T_0)$  the integro-differential inclusion
\begin{equation*}
	\left(\mathcal{P}_{f_1, f_2}(x_0,z)\right) \! 	
  \begin{cases} -\dot{x}(t)\in N_{C(t)}x(t)\!+\! f_{1}(t,x(t)) \!+\! \int_{T_{0}}^{t}f_{2}(t,s, x(s))ds+z(t) \, \text{a.e.}\, t\!\in\! I\\
	x(T_0)=x_{0}
	\end{cases}
\end{equation*}
	has a unique absolutely  continuous solution  $\zeta_z :I\rightarrow H$. Further, this solution satisfies the inequalities:
	\begin{equation*}
	\lVert \zeta_z(t) \rVert \leq \eta \quad \text{for all}\; t\in I,
	\end{equation*}
	\begin{equation*}
	\lVert f_{1}(t,\zeta_z(t))+z(t) \rVert\leq (1+\eta)(\beta_1(t)+\|z(t)\|) \leq
	(1+\eta)(\beta_{1}+\gamma)(t) \quad\text{for all}\;t\in I,
	\end{equation*}
	\begin{equation*}
	\lVert f_{2}(t,s,\zeta_z(s)) \rVert \leq g(t,s) + \eta\beta_2(t) \quad \text{for all}\,\,\,(t,s)\in Q_{\Delta}, \label{42}
	\end{equation*}
	and for $\varphi(t)$ given for any $t\in I$ by
	$$
	    \varphi(t):= \rvert\dot{\upsilon}(t)\rvert+ (1+\eta)(\beta_{1}+\gamma)(t)+\int_{T_0}^{t}g(t,s)\,ds+\eta(T-T_{0})\beta_2(t),
	$$
	one has for almost every $t \in I$
	\begin{equation*}
	  \left\| \dot{x}(t)+f_{1}(t,\zeta_z(t))+z(t)+\int_{T_0}^{t}f_{2}(t,s,\zeta_z(s))\,ds
	  \right\| \leq \varphi(t),
	\end{equation*}
where $\eta$ is any fixed real satisfying
	$$
	   \eta  \geq \lVert x_{0} \rVert\,\Psi(T_0,T)
	 + \Psi(T_0,T)\int_{T_0}^{T}\bigg(\rvert\dot{\upsilon}(s)\rvert
	 +2(\beta_{1}+\gamma)(s)+2\int_{T_0}^{T}g(s,\tau)\,d\tau\bigg)\,ds,
	$$
	with $\Psi(T_0,T):=\exp\bigg(\int_{T_0}^{T}(b(\tau)+1)\,d\tau\bigg)$ and
	 $b(t):=2\max\{(\beta_{1}+\gamma)(t),\beta_2(t)\}$. \\
	(b)  Given $x^i_0\in C(T_0)$ and $z_i\in \mathrm{Sel}\,\Gamma_{\gamma}$, $i=1,2$, denoting $\xi_{z_i}$ the solution of $\big(\mathcal{P}_{f_1,f_2}(x^i_0,z_i)\big)$, i.e. of \eqref{eq-Sweep3} with initial conditions $\xi_{z_i}(T_0)=x_{0}^{i}$ and with the mapping $z$ in place of the multimapping $F$, for
	$\mu_0:=\max\{\|x^1_0\|,\|x^2_0\|\}$ and for any real $\eta >0$ such that
$$
	   \eta  \geq \mu_0\,\Psi(T_0,T)
	 + \Psi(T_0,T)\int_{T_0}^{T}\bigg(\rvert\dot{\upsilon}(s)\rvert
	 +2(\beta_{1}+\gamma)(s)+2\int_{T_0}^{T}g(s,\tau)\,d\tau\bigg)\,ds,
	$$	
	the solutions $\xi_{z_1}$ and $\xi_{z_2}$  satisfy for every $t\in I$ the inequality
	\begin{equation}\label{estim_1}
	\lVert \xi_{z_1}(t)-\xi_{z_2}(t)  \rVert \leq \Phi(T_0, T)\bigg( \lVert x_{0}^{1}-x_{0}^{2} \rVert +\int_{T_{0}}^{t}\|z_1(s)-z_2(s)\|ds\bigg),
	\end{equation}
	where\  $\Phi(T_0, T):=\exp\bigg(\int_{T_{0}}^{T}(K(\tau)+1)\,d\tau\bigg)$ and $ K(t):=\max\bigg\{L_{1}^{\eta}(t)+\dfrac{\varphi(t)}{r},L_{2}^{\eta}(t)\bigg\} $.\\
\end{theorem}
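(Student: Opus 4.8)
\emph{Overview and strategy for existence in (a).} The plan is to treat the purely time-dependent term $z(\cdot)$ as an extra drift and fold it into $f_1$: setting $\tilde f_1(t,x):=f_1(t,x)+z(t)$, one checks from $(\mathcal{H}_2^F)$ (so $\|z(t)\|\le\gamma(t)$) and $\mathcal{H}(f_1)$ that $\tilde f_1$ still satisfies $\mathcal{H}(f_1)$, with $\beta_1$ replaced by $\beta_1+\gamma$ in the growth bound and with the \emph{same} local Lipschitz constants $L_1^\eta$ (since $z$ does not depend on $x$). Thus $\big(\mathcal{P}_{f_1,f_2}(x_0,z)\big)$ is an integro-differential prox-regular sweeping process of the form \eqref{eq-Sweep2} with data $\tilde f_1,f_2$. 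To produce an absolutely continuous solution I would run the semi-implicit Moreau catching-up scheme on partitions $T_0=t_0^n<\dots<t_n^n=T$, defining $x^n_{i+1}\in\mathrm{Proj}_{C(t^n_{i+1})}\big(x^n_i-\int_{t^n_i}^{t^n_{i+1}}[\tilde f_1(s,x^n_i)+\sum_{j\le i}(t^n_{j+1}-t^n_j)f_2(s,t^n_j,x^n_j)]\,ds\big)$, interpolating the $x^n_i$ piecewise-linearly, and passing to the limit. As $f_1,f_2$ are only locally Lipschitz, I would first truncate them outside $B[0,\eta]$ (with $\eta$ as in the statement) to obtain globally Lipschitz data; the a priori bound below shows the limit stays in $B[0,\eta]$, where truncated and original data coincide, so the limit solves the original inclusion.

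\emph{A priori bounds.} The quantitative estimates come from the velocity estimate for ACV prox-regular sweeping processes: writing $\nu(t):=-\dot x(t)-g(t)\in N_{C(t)}x(t)$ with $g(t):=f_1(t,x(t))+\int_{T_0}^tf_2(t,s,x(s))\,ds+z(t)$, one has $\|\nu(t)\|=\|\dot x(t)+g(t)\|\le|\dot\upsilon(t)|+\|g(t)\|$ a.e. Feeding in $(\mathcal{H}_1^{f_1})$, $(\mathcal{H}_1^{f_2})$ and $\|z(t)\|\le\gamma(t)\le\gamma(t)(1+\|x(t)\|)$, and using $\|\dot x(t)\|\le\|\nu(t)\|+\|g(t)\|\le|\dot\upsilon(t)|+2\|g(t)\|$, the function $\rho(t):=\|x(t)\|$ obeys
\[
\dot\rho(t)\le a(t)+2(\beta_1+\gamma)(t)\,\rho(t)+2\beta_2(t)\int_{T_0}^t\rho(s)\,ds,
\]
with $a(t)=|\dot\upsilon(t)|+2(\beta_1+\gamma)(t)+2\int_{T_0}^Tg(t,s)\,ds$. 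Lemma \ref{lem-Grown1} (with $b=2\max\{\beta_1+\gamma,\beta_2\}$) then yields $\|x(t)\|\le\eta$ for the stated $\eta$. Substituting $\|x(t)\|\le\eta$ into the growth hypotheses gives the pointwise bounds on $\|f_1+z\|$ and $\|f_2\|$, and $\|\nu(t)\|\le|\dot\upsilon(t)|+\|g(t)\|\le\varphi(t)$, which is exactly the velocity bound claimed in the theorem.

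\emph{Continuous dependence (b) and uniqueness.} This is the cleanest part and the one I would carry out in full. For the two solutions $\xi_{z_1},\xi_{z_2}$ (both in $B[0,\eta]$ by (a)), set $\rho(t):=\|\xi_{z_1}(t)-\xi_{z_2}(t)\|^2$; it is nonnegative and locally absolutely continuous, with $\tfrac12\dot\rho(t)=\langle\xi_{z_1}(t)-\xi_{z_2}(t),\dot\xi_{z_1}(t)-\dot\xi_{z_2}(t)\rangle$. Substituting the two inclusions and splitting into the normal-cone, $f_1$, Volterra and $z$ contributions, I estimate the normal-cone term by the hypomonotonicity of Proposition \ref{prop-Hypo} together with $\|\nu_i(t)\|\le\varphi(t)$, giving $\le\tfrac{\varphi(t)}{R}\rho(t)$; the $f_1$ term by $(\mathcal{H}_2^{f_1})$, giving $\le L_1^\eta(t)\rho(t)$; the Volterra term by $(\mathcal{H}_2^{f_2})$ and Cauchy--Schwarz, giving $\le L_2^\eta(t)\sqrt{\rho(t)}\int_{T_0}^t\sqrt{\rho(s)}\,ds$; and the $z$ term by Cauchy--Schwarz, giving $\le\|z_1(t)-z_2(t)\|\sqrt{\rho(t)}$. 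Altogether
\[
\dot\rho(t)\le 2\Big(L_1^\eta(t)+\tfrac{\varphi(t)}{R}\Big)\rho(t)+2\|z_1(t)-z_2(t)\|\sqrt{\rho(t)}+2L_2^\eta(t)\sqrt{\rho(t)}\int_{T_0}^t\sqrt{\rho(s)}\,ds,
\]
which is precisely the hypothesis of Lemma \ref{lem-Grown2} with $K_1=2(L_1^\eta+\varphi/R)$, $K_2=2\|z_1-z_2\|$, $K_3=2L_2^\eta$, so that $K=\max\{L_1^\eta+\varphi/R,L_2^\eta\}$ (matching the theorem's constant, i.e. $r=R$ the prox-regularity radius). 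Lemma \ref{lem-Grown2}, after bounding the inner exponentials by $\Phi(T_0,T)$, delivers \eqref{estim_1}. Taking $x_0^1=x_0^2$ and $z_1=z_2$ forces $\rho\equiv0$, which gives the uniqueness asserted in (a).

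\emph{Main obstacle.} The genuine difficulty is the existence step in infinite dimensions: proving that the catching-up approximants converge in $\mathcal{C}(I,H)$ and that one may pass to the limit in the inclusion $-\dot x(t)\in N_{C(t)}x(t)+\cdots$. This requires equi-absolute-continuity of the approximants (from the discrete analogue of the velocity estimate), a weak/strong limit argument exploiting the uniform $R$-prox-regularity and the resulting closedness and hypomonotonicity of the proximal normal cone to identify the limiting normal-cone inclusion, and care in showing that the discretized Volterra sum $\sum_{j\le i}(t^n_{j+1}-t^n_j)f_2(\cdot,t^n_j,x^n_j)$ converges to $\int_{T_0}^{\cdot}f_2(\cdot,s,x(s))\,ds$ along the approximants. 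By contrast, once existence and the a priori bounds are secured, the parts of the statement carrying the explicit constants are routine consequences of the two Gronwall lemmas.
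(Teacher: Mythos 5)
Your treatment of part (b) is essentially the paper's own proof: the same hypomonotonicity estimate from Proposition \ref{prop-Hypo} combined with the velocity bound $\varphi(t)$ (your reading $r=R$, the prox-regularity radius, is the correct interpretation of the constant in the statement), the same splitting into normal-cone, $f_1$, Volterra and $z$ contributions leading to the differential inequality for $\|\xi_{z_1}(t)-\xi_{z_2}(t)\|^2$, and the same application of Lemma \ref{lem-Grown2} with $K_1=2(L_1^\eta+\varphi/R)$, $K_2=2\|z_1-z_2\|$, $K_3=2L_2^\eta$; your remark that uniqueness in (a) follows by taking $x_0^1=x_0^2$ and $z_1=z_2$ is also the intended reading. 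The divergence is in the existence half of (a). Both you and the paper make the identical reduction, folding $z$ into the drift via $\tilde f_1(t,x):=f_1(t,x)+z(t)$ and checking that $\tilde f_1$ satisfies $\mathcal{H}(f_1)$ with $\beta_1+\gamma$ in place of $\beta_1$ and unchanged local Lipschitz constants; but at that point the paper simply invokes Theorem 4.2 of \cite{Bouach_Haddad_Thibault}, which is precisely the existence, uniqueness and a priori estimate theorem for the unperturbed integro-differential process \eqref{eq-Sweep2}, whereas you set out to re-prove that theorem by a truncated semi-implicit catching-up scheme and leave its convergence and limit-identification argument as a sketch (your ``main obstacle''). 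In a self-contained write-up that sketch would be a genuine gap --- the passage to the limit in infinite dimensions and the identification of the normal-cone term are exactly the hard analytic content --- but it is also exactly the content of the cited result, so the honest repair is one sentence: after the reduction, apply \cite[Theorem 4.2]{Bouach_Haddad_Thibault}. Your derivation of the a priori bounds (velocity estimate plus Lemma \ref{lem-Grown1} with $b=2\max\{\beta_1+\gamma,\beta_2\}$) correctly reproduces the constants $\eta$ and $\varphi$ appearing in the statement, which in the paper are likewise imported from that theorem rather than re-derived.
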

\begin{proof}
First, the existence of an absolutely continuous solution $\zeta_z(\cdot)$ and the properties in (a) for
$\big(\mathcal{P}_{f_1, f_2}(x_0, z)\big)$ are ensured by \cite[Theorem 4.2]{Bouach_Haddad_Thibault}.\\
Indeed, it is enough to put $f(t, x):=f_1(t, x)+z(t)$, so $f$ satisfies the assumption $\mathcal{H}(f_1)$ with $f$ in place of $f_1$ and satisfies also the inequality for all $t\in I$ and $x\in H$
\begin{equation*}
\|f(t, x)\|\leq (\beta_1(t)+\|z(t)\|)(t) (1+\|x\|)\leq (\beta_1+\gamma)(t)(1+\|x\|.
\end{equation*}

       Let us prove the estimation \eqref{estim_1}. Let $z_1$, $z_2$ be measurable selections of $\Gamma_{\gamma}$.
 Let $\xi_{z_{i}}(\cdot)$ be the solution of $\big(\mathcal{P}_{f_1,f_2}(x^i_0,z_i)\big)$), i.e. of \eqref{eq-Sweep3} with initial condition $x^i_0\in C(T_0)$ and with the mapping $z_i$ in place of the multimapping $F$, $i=1,2$. Then by the hypomonotonicity property of the normal cone in  Proposition \ref{prop-Hypo}, with
$J_i(t):=\int_{T_{0}}^{t} \! f_{2}(t,s,\xi_{z_i}(s))\,ds$ and $E_i(t):= f_{1}(t,\xi_{z_i}(t)) $ and with $\varphi(t)$ as given in (a) we have for almost every $t\in I$
\begin{equation*}
\begin{aligned}
& \Big\langle \! - \!\dot{\xi}_{z_1}(t) \! -z_1(t)- \! E_1(t)\! - \! J_1(t) + \! \dot{\xi}_{z_2}(t) \! +z_2(t)+ \! E_2(t) \!+ \! J_2(t), \xi_{z_1}(t) \! - \! \xi_{z_2}(t)\Big\rangle\\
&\leq \dfrac{\varphi(t)}{r}\lVert \xi_{z_1}(t)-\xi_{z_2}(t)  \rVert^{2},
\end{aligned}
\end{equation*}
from which we obtain
\begin{align*}
 &\quad\; \langle \dot{\xi}_{z_2}(t)-\dot{\xi}(z_1)(t), \xi_{z_2}(t)-\xi_{z_1}(t)\rangle \\
& \leq \dfrac{\varphi(t)}{r}\lVert \xi_{z_2}(t)-\xi_{z_1}(t)  \rVert^{2}
+ \langle E_1(t)- E_2(t) , \xi_{z_2}(t)-\xi_{z_1}(t) \rangle\\
&\ \ \ \ \ \  \ \ \ \ + \langle z_1(t)- z_2(t) , \xi_{z_2}(t)-\xi_{z_1}(t) \rangle+ \langle J_1(t)- J_2(t), \xi_{z_2}(t)-\xi_{z_1}(t) \rangle.
\end{align*}
Since, by the assumptions $ \mathcal{H}(f_1) $ and $ \mathcal{H}(f_2) $, there are with $\eta >0$ (as taken in the assumption in (b)) non-negative functions $L_{1}^{\eta}(\cdot)$ and $L_{2}^{\eta}(\cdot)$ in $ L^{1}(I,\mathbb{R}) $  such
that $f_{1}(t,\cdot)$ and $f_{2}(t,s,\cdot)$ are $L_{1}^{\eta}(t)$-Lipschitz and $L_{2}^{\eta}(t)$-Lipschitz respectively on $B[0,\eta] $, the above
inequality along with the inclusion $ \xi_{z_i}(t)\in B[0, \eta]$, $i=1,2$, entails with
$D_{1,2}(t):=\|\xi_{z_1}(t)-\xi_{z_2}(t)\|$ and $Z(t):=z_1(t)-z_2(t)$
\begin{equation*}
\dfrac{d}{dt}\big(D_{1,2}(t)^2\big) \!\leq\! 2\bigg(L^{\eta}_{1}(t) \! + \!  \dfrac{\varphi(t)}{r} \bigg)D_{1,2} (t)^{2}\! + 2\|Z(t)\|D_{1,2}(t)+\! 2L_{2}^{\eta}(t)D_{1,2}(t)\!\!\int_{T_{0}}^{t}\!\!D_{1,2}(s)\,ds.
\end{equation*}
Applying the Gronwall-like differential inequality in Lemma \ref{lem-Grown2}, it results that
\begin{equation*}
\begin{aligned}
\lVert \xi_{z_1}(t)-\xi_{z_2}(t)  \rVert &\leq \lVert x_{0}^{1}-x_{0}^{2} \rVert \exp\big(\int_{T_{0}}^{t}(K(\tau)+1)\,d\tau\big)\\
  & \quad\; +\int_{T_{0}}^{t}\|z_1(s)-z_2(s)\|\exp\big(\int_{s}^{t}(K(\tau)+1)\,d\tau\big)ds\\
	&\leq \Phi(T_0,T) \big(\lVert x_{0}^{1}-x_{0}^{2} \rVert+ \int_{T_{0}}^{t}\|z_1(s)-z_2(s)\|ds\big),
\end{aligned}
\end{equation*}
where $ K(t):=\max\bigg\{L_{1}^{\eta}(t)+\dfrac{\varphi(t)}{r},L_{2}^{\eta}(t)\bigg\} $ and
$ \Phi(T_0,T):=\exp\big(\int_{T_0}^{T}(K(\tau)+1)\,d\tau\big)$. The proof is then complete.
\end{proof}

\vspace{0.4cm}
  Assume that $(\mathcal{H}(C))$, $(\mathcal{H}(F))$, $(\mathcal{H}(f_1))$ and $(\mathcal{H}(f_2))$ holds for
	$C$, $F$, $f_1$ and $f_2$ as presented in the beginning of this section and fix $x_0\in C(T_0)$. Let $\gamma:I\to \mathbb{R_+}$ be $\lambda$-integrable on $I$ satisfying $(\mathcal{H}^F_2)$, i.e. $F(t,x)\subset \gamma(t)\mathbb{B}$ for all
	$t\in I$ and $x\in H$. The multimapping $\Gamma_{\gamma}(\cdot):=\gamma(\cdot)\mathbb{B}$
	is clearly measurable. Each $z\in \mathrm{Sel}\,\Gamma_{\gamma}$
	is in $L^1(I,H)$ and we keep $\zeta_z(\cdot)$ as the notation for the absolutely continuous solution of
	the integro-differential sweeping process
	\begin{equation}\label{eq-Diff-z}
	  \begin{cases}
		-\dot{\zeta}_z(t) \in N_{C(t)}\zeta_z(t) + f_1(t,\zeta_z(t))
		+\int_{T_0}^tf_2(t,s,\zeta_z(s))\,ds+z(t) \;\,
		\text{a.e.}\;t\in I \\
		\zeta_z(T_0)=x_0.
		\end{cases}
	\end{equation}
	Fix $q_0\in C(T_0)$ and take  $\mu_0:= \max\{\|x_0\|,\|q_0\|\}$ and any real $\eta >0$
		with
	$$
	   \eta  \geq \mu_0\,\Psi(T_0,T)
	 + \Psi(T_0,T)\int_{T_0}^{T}\bigg(\rvert\dot{\upsilon}(s)\rvert
	 +2(\beta_{1}+\gamma)(s)+2\int_{T_0}^{T}g(s,\tau)\,d\tau\bigg)\,ds,
	$$
	and let $q:I\to H$ be the absolutely continuous solution of
	\begin{equation}\label{eq-Diff-q}
	  \begin{cases}
	-\dot{q}(t) \in N_{C(t)}q(t)+f_1(t,q(t))+\int_{T_0}^tf_2(t,s,q(s))\,ds \;\, \text{a.e.}\; t\in I\\
		q(0)=q_0.
		\end{cases}
	\end{equation}	
	By Theorem \ref{theor-Translate}(b) for each $z\in \mathrm{Sel}\,\Gamma_{\gamma}$ one has for every $t\in I$
	\begin{equation}\label{eq-Est-z-q}
	   \|\zeta_z(t)-q(t)\| \leq \Phi(T_0,T)\left(\|x_0-q_0\|+\int_{T_0}^t\|z(s)\|\,ds\right),
	\end{equation}
	where $\Phi(T_0,T)$ is as given in (b) of Theorem \ref{theor-Translate}.
	
	    Take a real $r_0 \geq \|x_0-q_0\|$ and for $k:I\to \mathbb{R}_+$ in $L^1(I,\mathbb{R})$ as given by
	$(\mathcal{H}^F_1)$, let $r:I\to \mathbb{R}$ be the solution of the differential
	equation on $I$
	\begin{equation}\label{eq-Diff-r}
	   \begin{cases}
		  \dot{r}(t)=\gamma(t)+ \Phi(T_0,T)k(t)r(t) \\
			r(T_0)=r_0.
		 \end{cases}
	\end{equation}
	The solution of the latter differential equation is furnished by
\begin{equation}\label{eq-Expr-r}
   r(t)=r_{0}\, \exp(c(t))+\int_{T_0}^{t}\exp(c(t)-c(s))\gamma(s)\,ds \quad\text{for all}\; t\in I,
\end{equation}
where $c(\cdot)$ is the absolutely continuous function on $I$ given
\begin{equation}\label{eq-Exp-c(.)}
 c(t):=\Phi(T_0, T)\int_{T_0}^{t}k(\tau)d\tau \quad\text{for all}\;  t\in I,
\end{equation}
so we see that $r(\cdot)$ is absolutely continuous on $I$ and it can be verified that it satisfies
the differential equation \eqref{eq-Diff-r}. With the latter expression of $r(\cdot)$ at hands, let $N$ be a Borel subset of $I$ with null measure such that
\eqref{eq-Diff-r} holds for all $t\in I\setminus N$ and change or define $\dot{r}(\cdot)$ on $N$ in putting $\dot{r}(t)= \gamma(t)+ \Phi(T_0,T)k(t)r(t)$ for all $t\in N$. Doing so, the equation \eqref{eq-Diff-r} is
now satisfied for all $t\in I$. Note also that for all $t\in I$
\begin{equation}\label{eq-MajDer-r}
  r(t) \geq r_0 \quad\text{and} \quad
	\dot{r}(t) \geq \gamma(t).
\end{equation}

      Now, we are able to prove the main theorem of existence of solution of the differential inclusion
	$(\mathcal{P}_{f_1,f_2}(x_0,F))$. Various parts of the development of the proof are inspired by A.F.  Filippov \cite{Fili} and A.A. Tolstonogov \cite{Tolst2017}, and also A.D. Ioffe \cite{Ioff}.

   We keep as above  $\Gamma_{\gamma}(t):=\gamma(t)\mathbb{B}$ for all $t\in I$, and for each
 $z\in \mathrm{Sel}\,\Gamma_{\gamma}$ we take any solution $\zeta_z(\cdot)$ of $(\mathcal{P}_{f_1,f_2,z})$ with initial condition $\zeta_z(T_0)=x_0$.

\begin{theorem}\label{theor-main}
	Let $I:=[T_0,T]$ and let hypotheses $\mathcal{H}(C)$, $\mathcal{H}(F)$, $ \mathcal{H}(f_{1}),$ $\mathcal{H}(f_2)$ hold. Let $\gamma(\cdot)\in L^1(I,\mathbb{R}_+)$ be given by $(\mathcal{H}^F_2)$. Let any $q_0\in C(T_0)$ and let $q(\cdot)$ be any absolutely continuous solution of \eqref{eq-Diff-q}. Then, for any $x_{0}\in C(T_0)$ there exists some $z\in \mathrm{Sel}\,\Gamma_{\gamma}$ such that for the absolutely continuous solution $\zeta_z(\cdot)$ of the differential inclusion $(\mathcal{P}_{f_1,f_2}(x_0,z))$ with initial condition $\zeta_z(T_0)=x_0$ one has
$$
    z(t) \in F(t,\zeta_z(t)) \,\,\, \text{a.e.}\; t\in I
$$
as well as the estimations
	\begin{equation}\label{estim_final}
	\Vert \zeta_z(t)- q(t)\Vert \leq  \Phi(T_0, T)r(t) \quad\text{and}\quad \Vert z(t) \Vert \leq \dot{r}(t),
	\end{equation}
where $r(\cdot)$ and $\Phi(T_0,T)$ are as given above in \eqref{eq-Diff-r} and Theorem \ref{theor-Translate}(b) respectively; in particular, this absolutely continuous mapping $\zeta_z:I\to H$ is a solution of the differential inclusion $(\mathcal{P}_{f_1,f_2}(x_0,F))$
\begin{equation*}
	  \begin{cases}
		-\dot{x}(t) \in N_{C(t)}x(t)+f_1(t,x(t))+\int_{T_0}^tf_2(t,s,x(s))\,ds+F(t,x(t)) \;\,
		\text{a.e.}\;t\in I \\
		x(T_0)=x_0.
		\end{cases}
\end{equation*}
\end{theorem}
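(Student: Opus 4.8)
The plan is to obtain $z$ by a Filippov-type successive approximation. I would build measurable selections $z_n\in\mathrm{Sel}\,\Gamma_\gamma$ together with the associated solutions $\zeta_{z_n}$ of $(\mathcal{P}_{f_1,f_2}(x_0,z_n))$ (furnished by Theorem \ref{theor-Translate}(a)), arranged so that $z_{n+1}(t)\in F(t,\zeta_{z_n}(t))$, with the convention $\zeta_{z_0}:=q$. For the base step, take $z_1$ to be any measurable selection of the measurable closed-valued multimapping $t\mapsto F(t,q(t))$ (measurable as a composition of $F$ with the continuous $q$, exactly as in the discussion preceding Theorem \ref{theor-Translate}); by $(\mathcal{H}^F_2)$ any such selection already lies in $\mathrm{Sel}\,\Gamma_\gamma$. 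Given $z_n$ with $z_n(t)\in F(t,\zeta_{z_{n-1}}(t))$, Remark \ref{rem-Haus-Growth}(a) gives $d\big(z_n(t),F(t,\zeta_{z_n}(t))\big)\le\mathrm{haus}\big(F(t,\zeta_{z_{n-1}}(t)),F(t,\zeta_{z_n}(t))\big)\le k(t)\|\zeta_{z_{n-1}}(t)-\zeta_{z_n}(t)\|$, so the multimapping $t\mapsto F(t,\zeta_{z_n}(t))\cap B[z_n(t),\,k(t)\|\zeta_{z_{n-1}}(t)-\zeta_{z_n}(t)\|+\varepsilon_n(t)]$ has nonempty closed values and $\mathcal{L}(I)\otimes\mathcal{B}(H)$-measurable graph and, by Theorem \ref{theor-measurable}(d), admits a measurable selection $z_{n+1}$ (again in $\mathrm{Sel}\,\Gamma_\gamma$ by $(\mathcal{H}^F_2)$); here $\varepsilon_n(\cdot)\ge0$ is a summable integrable slack accounting for the possible non-attainment of the distance in $H$. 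Thus $\|z_{n+1}(t)-z_n(t)\|\le k(t)\|\zeta_{z_n}(t)-\zeta_{z_{n-1}}(t)\|+\varepsilon_n(t)$.

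Next I would combine this with the stability estimate \eqref{estim_1} of Theorem \ref{theor-Translate}(b), used with the common initial value $x_0$ so that the initial term drops out: $\|\zeta_{z_n}(t)-\zeta_{z_{n-1}}(t)\|\le\Phi(T_0,T)\int_{T_0}^t\|z_n(s)-z_{n-1}(s)\|\,ds$. Substituting yields $\|z_{n+1}(t)-z_n(t)\|\le\Phi(T_0,T)k(t)\int_{T_0}^t\|z_n(s)-z_{n-1}(s)\|\,ds+\varepsilon_n(t)$, and iterating this inequality produces the familiar factorial decay governed by $c(t)=\Phi(T_0,T)\int_{T_0}^tk$ from \eqref{eq-Exp-c(.)}; hence $\sum_n\|z_{n+1}-z_n\|_{L^1(I,H)}<\infty$ and $(z_n)$ converges in $L^1(I,H)$ to some $z$. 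By \eqref{estim_1} once more, $\zeta_{z_n}\to\zeta_z$ uniformly on $I$. Extracting a subsequence with $z_n(t)\to z(t)$ a.e.\ and using $d\big(z_{n+1}(t),F(t,\zeta_z(t))\big)\le k(t)\|\zeta_{z_n}(t)-\zeta_z(t)\|+\varepsilon_n(t)\to0$ together with the closedness of $F(t,\zeta_z(t))$ gives $z(t)\in F(t,\zeta_z(t))$ a.e.; since $z\in\mathrm{Sel}\,\Gamma_\gamma$, the mapping $\zeta_z$ solves $(\mathcal{P}_{f_1,f_2}(x_0,F))$.

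To obtain the sharp bounds \eqref{estim_final}, set $\rho(t):=\|\zeta_{z_1}(t)-q(t)\|+\sum_{j\ge2}\|\zeta_{z_j}(t)-\zeta_{z_{j-1}}(t)\|$, so that $\|\zeta_z(t)-q(t)\|\le\rho(t)$ and, by telescoping and the selection bound, $\|z(t)\|\le\gamma(t)+k(t)\rho(t)$. From \eqref{eq-Est-z-q} with $r_0\ge\|x_0-q_0\|$ one has $\|\zeta_{z_1}(t)-q(t)\|\le\Phi(T_0,T)\big(r_0+\int_{T_0}^t\gamma(s)\,ds\big)$, while summing the two inequalities of the previous paragraph over $j\ge2$ gives $\sum_{j\ge2}\|\zeta_{z_j}(t)-\zeta_{z_{j-1}}(t)\|\le\Phi(T_0,T)\int_{T_0}^tk(s)\rho(s)\,ds$ (up to the negligible $\varepsilon_n$-contribution). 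Therefore $\rho(t)\le\Phi(T_0,T)\big(r_0+\int_{T_0}^t\gamma\big)+\Phi(T_0,T)\int_{T_0}^tk(s)\rho(s)\,ds$, which is precisely the linear integral equation solved by $\Phi(T_0,T)\,r(t)$ in view of \eqref{eq-Diff-r}--\eqref{eq-Expr-r}. The classical Gronwall comparison then gives $\rho(t)\le\Phi(T_0,T)r(t)$, whence $\|\zeta_z(t)-q(t)\|\le\Phi(T_0,T)r(t)$ and $\|z(t)\|\le\gamma(t)+\Phi(T_0,T)k(t)r(t)=\dot r(t)$ by \eqref{eq-MajDer-r}.

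The step I expect to be the main obstacle is the measurable selection under the prescribed distance control: since the values of $F$ are merely closed and nonconvex in the possibly infinite-dimensional space $H$, a nearest point in $F(t,\zeta_{z_n}(t))$ to $z_n(t)$ need not exist, which forces the integrable slacks $\varepsilon_n$; recovering the clean constant $\dot r(t)$ in \eqref{estim_final} then requires either choosing $\sum_n\varepsilon_n$ arbitrarily small, or running the whole scheme with $k_\varepsilon=k+\varepsilon$ (Remark \ref{rem-Haus-Growth}(a)) and passing to the limit $\varepsilon\downarrow0$, all while keeping the integral inequality for $\rho$ aligned exactly with the linear equation defining $r$.
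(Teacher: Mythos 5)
Your scheme is the same Filippov-type successive approximation that the paper uses: the paper merely rewrites it with the translated multimapping $G(t,x):=F(t,x+q(t))$ and the shifted iterates $y_{i+1}:=\zeta_{z_i}-q$ (purely notational), and it carries the two estimates of \eqref{estim_final} along inside the induction, whereas you extract them afterwards by comparison with the integral equation solved by $\Phi(T_0,T)r(t)$; both of those are sound and essentially equivalent. The one substantive divergence is the selection step, and there the obstacle you flag as the main difficulty is not actually present. Hypothesis $(\mathcal{H}^F_1)$ is stated as the \emph{inclusion} $F(t,y)\subset F(t,x)+k(t)\|x-y\|\mathbb{B}$, not as a Hausdorff-distance bound. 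Hence $z_n(t)\in F(t,\zeta_{z_{n-1}}(t))\subset F(t,\zeta_{z_n}(t))+k(t)\|\zeta_{z_n}(t)-\zeta_{z_{n-1}}(t)\|\mathbb{B}$ already exhibits a point of $F(t,\zeta_{z_n}(t))$ at distance at most exactly $k(t)\|\zeta_{z_n}(t)-\zeta_{z_{n-1}}(t)\|$ from $z_n(t)$: no nearest point is invoked and no slack $\varepsilon_n$ is needed. The paper packages this as Proposition \ref{prop-measurable-sum}: $z_n$ is a measurable selection of $\Gamma_1+\Gamma_2$ with $\Gamma_1(t):=F(t,\zeta_{z_n}(t))$ and $\Gamma_2(t):=k(t)\|\zeta_{z_n}(t)-\zeta_{z_{n-1}}(t)\|\mathbb{B}$, so it splits measurably as $z_n=z_{n+1}+w$ with $z_{n+1}$ a selection of $\Gamma_1$ and $\|w(t)\|\le k(t)\|\zeta_{z_n}(t)-\zeta_{z_{n-1}}(t)\|$. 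Your ball-intersection argument gives the same thing provided you take the radius to be exactly $k(t)\|\zeta_{z_n}(t)-\zeta_{z_{n-1}}(t)\|$ and justify nonemptiness by the inclusion rather than by the Hausdorff estimate of Remark \ref{rem-Haus-Growth}(a), which is what loses the attainment.

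You should make that repair rather than rely on either of your fallback plans, because those are the genuinely shaky part of the write-up. Within a single scheme with summable slacks you do recover the inclusion $z(t)\in F(t,\zeta_z(t))$ in the limit, but the estimates \eqref{estim_final} then come out only up to an additive error controlled by $\sum_n\varepsilon_n$; to remove that error you must let the slack tend to zero over a \emph{family} of schemes, each producing a different pair $(z^{(\delta)},\zeta_{z^{(\delta)}})$, and the passage $\delta\downarrow 0$ would require the inclusion to survive a weak $L^1$ limit of the $z^{(\delta)}$ --- exactly what fails in general here, since the values of $F$ are nonconvex. (The same objection applies to running the scheme with $k_\varepsilon=k+\varepsilon$ and letting $\varepsilon\downarrow 0$.) With the slack-free selection the issue evaporates and your argument closes exactly as the paper's does. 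One further simplification: the bound $\|z(t)\|\le\dot r(t)$ needs none of your telescoping; every $z_n(t)$ lies in the closed ball $\gamma(t)\mathbb{B}$ by $(\mathcal{H}^F_2)$, so the pointwise limit satisfies $\|z(t)\|\le\gamma(t)\le\dot r(t)$ by \eqref{eq-MajDer-r}.
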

\begin{proof}
	The proof of existence of solution proceeds with two steps.\\
Let $G: I\times H \rightrightarrows H$ be the multimapping defined by
	\begin{equation}\label{G}
	G(t, x)=F(t, x+q(t)) \quad\text{for all}\; x\in H, \ t\in I.
	\end{equation}
For any $\mathcal{L}(I)$-measurable mapping $p:I\to H$ put $M_p(t,x):=F(t,x+p(t))$, so $G(t,x)=M_q(t,x)$.
 Fix $x,y\in H$ and consider the function $\theta:I\times H\to \R$ with $\theta(t,u)=d(y,F(t,x+u))$ for all
$t\in I$ and $u\in H$. For each $u\in H$ the function $\theta(\cdot,u)$ is $\mathcal{L}(I)$-measurable by Theorem \ref{theor-measurable}(a) and for each $t\in I$ the function $\theta(t,\cdot)$ is continuous, even
Lipschitz, on $H$. This ensures that $\theta$ is a Carath\'eodory function, hence $\mathcal{L}(I)\otimes\mathcal{B}(H)$ measurable. The mapping
$t\mapsto (t,x+p(t))$ from $I$ into $I\times H$ being $(\mathcal{L}(I),\mathcal{L}(I)\otimes \mathcal{B}(H))$ measurable, it ensues that the function $t\mapsto \theta(t,x+p(t))$ is $\mathcal{L}(I)$-measurable. This means that the function $t\mapsto d(y,M_p(t,x))$ is $\mathcal{L}(I)$-measurable, so by Theorem
\ref{theor-measurable}(a) again the multimapping $t\mapsto M_p(t,x)$ is $\mathcal{L}(I)$-measurable. \\
$\bullet$ By what precedes the multimapping $t\mapsto G(t,x)$ is $\mathcal{L}(I)$-measurable. \\
$\bullet$ Clearly for all $x,y\in H$ one has
$$
    G(t,y) \subset G(t,x)+ k(t)\|x-y\|\mathbb{B}\;\;\text{and}\;\;
		G(t,x) \subset \gamma(t)\mathbb{B}.
$$
\textbf{Step 1. Construction by induction of sequences  $(y_{i})_{i}$, $(z_i)_{i}$.} \newline
We construct by induction measurable sequences $(y_{i})_i$, $(z_{i})_i$,  $i\in \{0\}\cup\mathbb{N}$ with the properties
\begin{equation}\label{properity_1_of_y}
y_{0}(t)=0,\; z_i(t)\in G(t,y_i(t)), \;\,  y_{i+1}(t)=\zeta_{z_{i}}(t)- q(t),
\end{equation}
and such that for every $t\in I$
\begin{equation}\label{properity_3_of_y}
\Vert z_i(t)\Vert \leq \dot{r}(t), \;\, \Vert y_{i+1}(t)\Vert \leq \Phi(T_0, T)r(t),
\end{equation}
\begin{equation}\label{properity_4_of_y}
\Vert z_i(t)-z_{i-1}(t)\Vert \leq k(t)\Vert y_i(t)-y_{i-1}(t)\Vert,  \;\,  i\geq 1,
\end{equation}
\begin{equation}\label{properity_5_of_y}
\Vert z_i(t)-z_{i-1}(t)\Vert \leq \Phi(T_0, T)k(t)\bigg\{ r_0\dfrac{[c(t)]^{i-1}}{(i-1)!}+\int_{T_0}^{t} \dfrac{[c(t)-c(s)]^{i-1}}{(i-1)!} \gamma(s) ds\bigg\}, \; i\geq 1,
\end{equation}
\begin{equation}\label{properity_6_of_y}
\Vert y_{i+1}(t)-y_{i}(t)\Vert \leq \Phi(T_0, T)\bigg\{ r_0\dfrac{[c(t)]^{i}}{i!}+\int_{T_0}^{t} \dfrac{[c(t)-c(s)]^{i}}{i!} \gamma(s) ds\bigg\},
\end{equation}
where $c: I\to \mathbb{R}_+$ is the function defined in \eqref{eq-Exp-c(.)}.
\vskip 0.2cm
 \textbf{For i=0.} \quad
 By Theorem \ref{theor-measurable}(d) we can choose an $\mathcal{L}(I)$-measurable selection $z_0$ of the measurable multimapping  $t\mapsto G(t,y_0(t))$, so $z_0(t) \in G(t,y_0(t))$ and $\|z_0(t)\|\leq \gamma(t)\leq \dot{r}(t)$. Noting that $z_0\in \mathrm{Sel}\,\Gamma_{\gamma}$, we take $\zeta_{z_0}(\cdot)$ as the absolutely continuous solution of
$(\mathcal{P}_{f_1,f_2}(x_0,z_0))$ and we put $y_1(t)=\zeta_{z_0}(t)-q(t)$ for all $t\in I$.
By Theorem \ref{theor-Translate}(b) we have (since $r_0 \geq \|x_0-q_0\|$)
\begin{align*}
   \|y_1(t)-y_0(t)\| & \leq \Phi(T_0,T)\left(\|x_0-q_0\| +\int_{T_0}^t\|z_0(s)\|\,ds\right) \\
	 & \leq \Phi(T_0,T)\left\{r_0\frac{[c(t)]^0}{0!} +\int_{T_0}^t\frac{[c(t)-c(s)]^0}{0!}\gamma(s)\,ds\right\},
\end{align*}
and also (since $y_0(t)=0$)
\begin{align*}
  \|y_1(t)\| & \leq \Phi(T_0,T)\left(\|x_0-q_0\| +\int_{T_0}^t\|z_0(s)\|\,ds\right) \\
	 & \leq \Phi(T_0,T)\left(r_0 +\int_{T_0}^t\dot{r}(s)\,ds\right)=\Phi(T_0,T)r(t).
\end{align*}
All the properties of the induction hold true for $i=0$.

\vskip 0.2cm
\textbf{For i=1.}\quad
Writing
$$
   z_0(t) \in G(t,y_0(t)) \subset G(t,y_1(t))+k(t)\|y_1(t)-y_0(t)\|\mathbb{B},
$$
applying Proposition \ref{prop-measurable-sum} and using, by what precedes, the $\mathcal{L}(I)$- measurability of the multimapping
$t\mapsto G(t,y_j(t))$, $j=0,1$, we can find an $\mathcal{L}(I)$-measurable selection $z_1(\cdot)$ of $t\mapsto G(t,y_1(t))$ such that $z_0(t)-z_1(t) \in k(t)\|y_1(t)-y_0(t)\|\mathbb{B}$, hence
\begin{equation}\label{eq-Diff-z1-z0}
    \|z_1(t)-z_0(t)\| \leq k(t)\|y_1(t)-y_0(t)\| \quad\text{for all}\; t\in I.
\end{equation}
Further, the inclusion $z_1(t) \in G(t,y_1(t))=F(t,y_1(t)+q(t))$ gives
$$
     \|z_1(t)\| \leq \gamma(t) \leq \dot{r}(t) \quad \text{for all}\; t\in I.
$$
Let $\zeta_{z_1}$ be the absolutely continuous solution of $(\mathcal{P}_{f_1,f_2}(x_0,z_1))$.
By \eqref{eq-Diff-z1-z0} and by what obtained in the step $i=0$ we have for all $t\in I$
\begin{align*}
   \|z_1(t)-z_0(t)\| & \leq k(t)\|y_1(t)\|=k(t)\|\zeta_{z_1}(t)-q(t)\| \\
    & \leq k(t)\Phi(T_0,T)\left(\|x_0-q_0\| +\int_{T_0}^t\|z_0(s)\|\,ds\right) \\
		 & \leq k(t)\Phi(T_0,T)\left(r_0 +\int_{T_0}^t\gamma(s)\,ds\right) \\
 & =\Phi(T_0,T)k(t)\left\{r_0\frac{[c(t)]^0}{0!}+\int_{T_0}^t\frac{[c(t)-c(s)]^0}{0!}\gamma(s)\,ds\right\}.
\end{align*}
For each $t\in I$ let $y_2(t)=\zeta_{z_1}(t)- q(t)$, so $y_2(t)-y_1(t)=\zeta_{z_1}(t)-\zeta_{z_0}(t)$, hence Theorem \ref{theor-Translate}(b) and the latter inequality above yield
\begin{align*}
   \|y_2(t)-y_1(t)\| & \leq \Phi(T_0,T)\left(\|x_0-x_0\|+\int_{T_0}^t\|z_1(s)-z_0(s)\|\,ds\right) \\
	  & \leq \Phi(T_0,T)\left\{\Phi(T_0,T)\int_{T_0}^tk(s)\left(r_0+\int_{T_0}^{s}\gamma(\tau)\,d\tau\right)
		        \,ds\right\}\\
		& = \phi(T_0,T)\left\{r_0c(t)+
		  \Phi(T_0,T)\int_{T_0}^tk(s)\left(\int_{T_0}^{s}\gamma(\tau)\,d\tau\right)\,ds\right\},
\end{align*}
where the latter equality follows from the fact that $\Phi(T_0,T)k(\cdot)$ is the derivative of the function $c(\cdot)$ according to the definition of $c(\cdot)$ in \eqref{eq-Exp-c(.)}. This gives according to Lemma \ref{lem-Expr-Int-c} below
\begin{equation*}			
	\|y_2(t)-y_1(t)\| 		
			\leq \Phi(T_0,T)\left\{r_0c(t)+\int_{T_0}^t[c(t)-c(s)]\gamma(s)\,ds\right\}.
\end{equation*}
It ensues that for each $t\in I$
$$
  \|y_2(t)-y_1(t)\|  \leq \Phi(T_0,T)\left\{r_0\frac{[c(t)]^1}{1!}
	  +\int_{T_0}^t\frac{[c(t)-c(s)]^1}{1!}\gamma(s)\,ds\right\}.
$$
On the other hand, for each $t\in I$ we also have by Theorem \ref{theor-Translate}(b) again
\begin{align*}
 \|y_2(t)\| & =\|\zeta_{z_1}(t)-q(t)\| \\
 & \leq \Phi(T_0,T)\left(\|x_0-q_0\| +\int_{T_0}^t\|z_1(s)\|\,ds\right) \\
 & \leq \Phi(T_0,T)\left(r_0 + \int_{T_0}^t\dot{r}(s)\,ds\right)=\phi(T_0,T)r(t).
\end{align*}
All the properties of the induction then hold true for $i=0$ and $i=1$.

\vskip 0.2cm
{\bf From $i-1$ to $i$.} \quad
Suppose that the construction has been done for $j=0,\cdots,i-1$.  Since $z_{i-1}\in G(t,y_{i-1}(t))$,
the assumption $(\mathcal{H}^F_1)$ gives for each $t \in I$
$$
  z_{i-1}(t) \in G(t,y_i(t)) +k(t)\|y_i(t)-y_{i-1}(t)\|\mathbb{B},
$$
so by Proposition \ref{prop-measurable-sum} there exists an $\mathcal{L}(I)$-measurable selection $z_i$ of the multimapping $t\mapsto G(t,y_i(t))$ such that
\begin{equation}\label{eq-Diff-z-Diff-y}
  \|z_i(t)-z_{i-1}(t)\|\leq k(t)\|y_i(t)-y_{i-1}(t)\| \,\,\,\text{for all}\; t\in I.
\end{equation}
Note also that the inclusion $z_i(t)\in G(t,y_i(t))$ ensures that $z_i\in \mathrm{Sel}\,\Gamma_{\gamma}$ and
$$
   \|z_i(t)\| \leq \gamma(t) \leq \dot{r}(t) \quad\text{for all}\; t\in I.
$$
Let $\zeta_{z_i}$ be the solution of $(\mathcal{P}_{f_1,f_2}(x_0,z_i))$.
Define $y_{i+1}(t)=\zeta_{z_{i}}- q(t)$ and note that $y_{i+1}(\cdot)$ is $\mathcal{L}(I)$-measurable.
By Theorem \ref{theor-Translate}(b) we have for every $t\in I$
\begin{align*}
\Vert  y_{i+1}(t)\Vert & = \Vert  \zeta_{z_i}(t)-q(t)\Vert
   \leq \Phi(T_0, T)\left(\|x_0-q_0\|+\int_{T_0}^t\|z_i(s)\|\,ds \right) \\
	  & \leq \Phi(T_0,T)\left(r_0+\int_{T_0}^t\dot{r}(s)\,ds \right)=\Phi(T_0,T)r(t).
\end{align*}
Using \eqref{eq-Diff-z-Diff-y} and \eqref{properity_6_of_y} fulfilled for $i-1$ we obtain for all $t\in I$
\begin{equation*}
 \Vert z_i(t)-z_{i-1}(t)\Vert \leq \Phi(T_0, T)k(t)\bigg[r_0\dfrac{[c(t)]^{i-1}}{(i-1)!}+\int_{T_0}^{t} \dfrac{[c(t)-c(s)]^{i-1}}{(i-1)!} \gamma(s) ds\bigg] .
\end{equation*}
Noting that $\Vert y_{i+1}(t) -y_{i}(t) \Vert =\Vert \zeta_{z_i}(t) -\zeta_{z_{i-1}}(t) \Vert$ by definition of
$y_i(\cdot)$ and $y_{i+1}(\cdot)$, Theorem \ref{theor-Translate}(b) and the latter inequality assures us that
\begin{align*}
 & \quad\; \Vert y_{i+1}(t) -y_{i}(t) \Vert \\
& \leq \Phi(T_0, T)\bigg(\Vert x_{0}-x _{0}\Vert+\int_{T_{0}}^{t} \Vert z_i(s)-z_{i-1}(s)\Vert ds\bigg)\\
  &=\Phi(T_0, T)\int_{T_{0}}^{t} \Vert z_i(s)-z_{i-1}(s)\Vert ds\\
	&\leq\Phi(T_0, T)\int_{T_{0}}^{t}\Phi(T_0, T)k(s)\bigg[r_0\dfrac{[c(s)]^{i-1}}{(i-1)!}+\int_{T_0}^{s} \dfrac{[c(s)-c(\tau)]^{i-1}}{(i-1)!} \gamma(\tau) d\tau\bigg]ds,
\end{align*}
which gives
\begin{align*}
	& \quad\; \Vert y_{i+1}(t) -y_{i}(t) \Vert \\
	&\leq \Phi(T_0,T)\left\{\int_{T_{0}}^{t}r_0\Phi(T_0, T) k(s)\dfrac{[c(s)]^{i-1}}{(i-1)!}ds+\int_{T_{0}}^{t}\Phi(T_0, T)k(s)\int_{T_0}^{s} \dfrac{[c(s)-c(\tau)]^{i-1}}{(i-1)!} \gamma(\tau) d\tau ds\right\} \\
	&=\Phi(T_0, T)\left\{r_0\dfrac{[c(s)]^{i}}{(i)!}+\int_{T_0}^{t} \dfrac{[c(t)-c(s)]^{i}}{(i)!} \gamma(s)  ds\right\},
\end{align*}
the latter equality being due to the fact that $ \Phi(T_0, T)k(t)$ is the derivative of $c(t):=\Phi(T_0, T)\int_{T_0}^{t}k(s)ds$ and to Lemma \ref{lem-Expr-Int-c} below. The sequences $(y_i)_i$ and $(z_i)_i$ are then well-defined by induction to satisfy the required properties.

\vspace{0.2cm}
\textbf{Step 2. Convergence of the sequence $y_{i}(\cdot)_i$.} \\
From \eqref{properity_5_of_y} we infer for each $t\in I$ that the series $ \sum_{i=0}^{\infty} \|z_i(t)-z_{i-1}(t)\|$ converges.
Therefore, $(z_i(t))_i$ is a Cauchy sequence, hence the sequence $(z_i(\cdot))_i $ converges pointwise to a measurable mapping $z(\cdot)$. Since $\|z_i(t)\|\leq \gamma(t)$ for all $t\in I$, the Lebesgue dominated convergence theorem entails that the sequence $(z_i(\cdot))_i$ converges to $z(\cdot)$ in the space
$L^{1}(I, H)$. Further, $\|z(t)\| \leq \gamma(t)$ for all $t\in I$, so $z\in \mathrm{Sel}\,\Gamma_\gamma$.
Let $\zeta_z(\cdot)$ be the  solution of $(\mathcal{P}_{f_1,f_2}(x_0,z)))$.
 Then, from the inequality in Theorem \ref{theor-Translate}(b) it follows that
\begin{equation}
\|\zeta_z(t)-\zeta_{z_i}(t)\|\leq \Phi(T_0, T)\int_{T_0}^{t}\|z(\tau)-z_i(\tau)\|\,d\tau .
\end{equation}
From this inequality and from the Lebesgue dominated convergence theorem we deduce that the sequence
$(\zeta_{z_{i}}(\cdot))_i$ converges to $\zeta_z(\cdot)$ in the space $C(I, H)$.
This and the equality
$y_{i+1}(\cdot) =\zeta_{z_{i}}(\cdot)- q(\cdot)$  furnish that the sequence $(y_i(\cdot))_i$ converges uniformly on $I$ to $y(\cdot):= \zeta_z(\cdot)- q(\cdot)$.
On the other hand, by $(\mathcal{H}^F_1)$ we have for every $t \in I$
\begin{equation*}
d(z_i(t), G(t, y(t)))\leq k(t)\|y_{i}(t)-y(t)\|,
\end{equation*}
so passing to the limit as $ i \rightarrow \infty $ in the latter inequality and taking \eqref{G} into account we derive that for each $t\in I$
\begin{equation}\label{zzz}
  z(t)\in G(t, y(t))=G(t, \zeta_z(t)-q(t))=F(t, \zeta_z(t)).
\end{equation}
Consequently, we conclude  that $\zeta_z(\cdot)$ is a solution of $(\mathcal{P}_{f_{1},f_{2}}(x_0,F))$ and the inequalities \eqref{estim_final} hold. The proof of the theorem is finished.
\end{proof}

\begin{lemma}\label{lem-Expr-Int-c}
Under notation above one has
$$
  \int_{T_{0}}^{t}\Phi(T_0, T)k(s)\int_{T_0}^{s} \dfrac{[c(s)-c(\tau)]^{i-1}}{(i-1)!} \gamma(\tau) d\tau ds
  = \int_{T_0}^t\dfrac{[c(t)-c(s)]^{i}}{i!} \gamma(s)ds.
$$
\end{lemma}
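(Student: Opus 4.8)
The plan is to recognize, from the definition \eqref{eq-Exp-c(.)} of $c(\cdot)$, that $\Phi(T_0,T)k(s)$ is exactly $\dot c(s)$ for almost every $s\in I$, and then to interchange the order of integration in the iterated integral on the left-hand side. Writing $L$ for that left-hand side and recording $\dot c(s)=\Phi(T_0,T)k(s)$ a.e., I would first rewrite
$$
L=\int_{T_0}^t\!\!\int_{T_0}^s \dot c(s)\,\frac{[c(s)-c(\tau)]^{i-1}}{(i-1)!}\,\gamma(\tau)\,d\tau\,ds.
$$
The domain of integration is the triangle $\{(s,\tau):T_0\le\tau\le s\le t\}$, and the integrand is nonnegative (recall $c$ is nondecreasing, being the integral of the nonnegative function $\Phi(T_0,T)k$, and $\gamma\ge 0$), so Tonelli's theorem permits swapping the order of integration to obtain
$$
L=\int_{T_0}^t \gamma(\tau)\left(\int_{\tau}^t \dot c(s)\,\frac{[c(s)-c(\tau)]^{i-1}}{(i-1)!}\,ds\right)d\tau.
$$

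Next I would evaluate the inner integral by the fundamental theorem of calculus. For fixed $\tau$ the map $s\mapsto [c(s)-c(\tau)]^i$ is absolutely continuous on $[\tau,t]$, since $c$ is absolutely continuous and $u\mapsto u^i$ is Lipschitz on the bounded range of $c$, and its derivative is $i\,[c(s)-c(\tau)]^{i-1}\dot c(s)$ for a.e.\ $s$. Hence
$$
\int_{\tau}^t \dot c(s)\,\frac{[c(s)-c(\tau)]^{i-1}}{(i-1)!}\,ds
=\frac{1}{i!}\Big[[c(s)-c(\tau)]^i\Big]_{s=\tau}^{s=t}
=\frac{[c(t)-c(\tau)]^i}{i!},
$$
the lower endpoint $s=\tau$ contributing $0$. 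Substituting this back into the expression for $L$ and renaming the integration variable $\tau$ as $s$ yields precisely $\int_{T_0}^t \frac{[c(t)-c(s)]^i}{i!}\gamma(s)\,ds$, which is the right-hand side of the claimed identity.

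This is an elementary computation, so there is no genuine obstacle; the only points deserving a word of justification are the applicability of Tonelli's theorem, guaranteed by the nonnegativity of the integrand together with the integrability of $k$ and $\gamma$, and the use of the fundamental theorem of calculus for the absolutely continuous function $s\mapsto [c(s)-c(\tau)]^i$. Both are routine consequences of $c(\cdot)$ being absolutely continuous on $I$ with $\dot c=\Phi(T_0,T)k\in L^1(I,\mathbb{R}_+)$.
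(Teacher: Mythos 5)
Your proof is correct and follows essentially the same route as the paper: both arguments rest on the identity $\dot c(s)=\Phi(T_0,T)k(s)$, an interchange of the order of integration via Fubini/Tonelli, and the fundamental theorem of calculus applied to the absolutely continuous function $s\mapsto [c(s)-c(\tau)]^{i}$. The only difference is organizational --- the paper first splits the inner integral as $\int_{T_0}^{s}=\int_{T_0}^{t}-\int_{s}^{t}$ and applies Fubini to each of the two resulting pieces, letting the boundary terms $[c(T_0)-c(\tau)]^{i}/i!$ cancel, whereas you swap directly on the triangle $\{T_0\le\tau\le s\le t\}$, which is slightly more direct and makes the lower endpoint contribute $0$ outright.
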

\begin{proof}
Put $k_0(t):=\Phi(T_0,T)k(t)$, so ${k}_0(t)=\dot{c}(t)$ since $c(t)=\Phi(T_0,T)\int_{T_0}^tk(s)ds$. Denoting $I(t)$ the integral in the left-hand side of the statement of the lemma, we have
$I(t)=I_1(t) - I_2(t)$ for every $t\in I$, where
$$
  I_1(t)=\int_{T_0}^tk_0(s)(\int_{T_0}^t\frac{[c(s)-c(\tau)]^{i-1}}{(i-1)!}\gamma(\tau)d\tau)ds,
$$
$$
	I_2(t)=\int_{T_0}^tk_0(s)(\int_{s}^t\frac{[c(s)-c(\tau)]^{i-1}}{(i-1)!}\gamma(\tau)d\tau)ds.
$$
Let us write by Fubini theorem
\begin{align*}
I_1(t) & = \int_{T_0}^t\left(\int_{T_0}^tk_0(s)\frac{[c(s)-c(\tau)]^{i-1}}{(i-1)!}ds\right)\gamma(\tau)d\tau\\
 &= \int_{T_0}^t\frac{[c(t)-c(\tau)]^{i}}{i!}\gamma(\tau)d\tau
    - \int_{T_0}^t\frac{[c(T_0)-c(\tau)]^{i}}{i!}\gamma(\tau)d\tau.
\end{align*}
Using again the Fubini theorem, we also have
\begin{align*}
I_2(t) & = \int_{T_0}^t\left(\int_s^tk_0(s)\frac{[c(s)-c(\tau)]^{i-1}}{(i-1)!}\gamma(\tau)d\tau\right)ds \\
 &= \int_{T_0}^t\left(\int_{T_0}^{\tau}k_0(s)\frac{[c(s)-c(\tau)]^{i-1}}{(i-1)!}ds\right)\gamma(\tau)d\tau\\
&= -\int_{T_0}^t\frac{[c(T_0)-c(\tau)]^i}{i!}\gamma(\tau)d\tau.
\end{align*}
Coming back to the equality $I(t)=I_1(t)-I_2(t)$, we deduce the desired
$$
   I(t)=\int_{T_0}^t\frac{[c(t)-c(\tau)]^i}{i!}\gamma(\tau)d\tau.
$$
\end{proof}

\begin{rem}\label{rem-f1-F}
 {\em We emphasize the presence (instead of the unique multimapping $F(t,\cdot)$) of both the mapping $f_1(t,\cdot)$ and the multimapping $F(t,\cdot)$. This is explained as follows. First, the condition $(\mathcal{H}^F_2)$ requires the boundedness of $F(t,\cdot)$ as $F(t,x)\subset \gamma(t)\mathbb{B}$ for all $x\in H$ while the mapping
$f_1(t,\cdot)$ has to satisfy merely the growth condition $\|f_1(t,x)\|\leq \beta_1(t)(1+\|x\|)$ for all
$x\in H$, so the multimapping $f_1(t,\cdot)+F(t,\cdot)$ does not  fulfill the  boundedness condition of type $(\mathcal{H}^F_2)$. On the other hand, $(\mathcal{H}^F_1)$ assumes the $L_1^{\eta}(t)$-Lipschitz condition for $f_1(t,\cdot)$ only on the ball $\eta\mathbb{B}$, hence the multimapping $f_1(t,\cdot)+F(t,\cdot)$ is not Lipschitz on the whole space $H$. Then, the conditions needed in Theorem \ref{theor-main} for the perturbation multimapping does not hold for $f_1(t,\cdot)+F(t,\cdot)$.}
\hfill $\square$
\end{rem}

   As obtained, the result in Theorem \ref{theor-main} allows to derive a relaxation property. Denote
$\overline{\mathrm{co}}\,F:I\times{H} \rightrightarrows H$ the multimapping defined by
$$
   (\overline{\mathrm{co}}\,F)(t,x):= \overline{\mathrm{co}}\,F(t,x) \quad\text{for all}\; t\in I,\;
	  x \in H,
$$
and note by $(\mathcal{H}^F_0)$ and Theorem \ref{theor-measurable}(b) that, for each $x\in H$, the multimapping $\overline{\mathrm{co}}\,F(\cdot,x)$ is $\mathcal{L}(I)$-measurable.
Fix $x_0\in C(T_0)$ and assume that the hypotheses $(\mathcal{H}^F_i)$, $i=0,1,2$, are satisfied. Noticing that
$ \overline{\mathrm{co}}\,F(t,x)+k(t)\|x-y\|\mathbb{B}$ is weakly closed and convex (the closedness property being due to the weak compactness of the unit ball $\mathbb{B}$ of $H$) we see by $(\mathcal{H}^F_1)$ that
$$
   \overline{\mathrm{co}}\,F(t,y) \subset \overline{\mathrm{co}}\,F(t,x) + k(t)\|x-y\|\mathbb{B},
$$
so all the hypotheses $(\mathcal{H}_i^{\overline{\mathrm{co}}\,F})$, with $i=0,1,2$, are fulfilled. Therefore, under the hypotheses $(\mathcal{H}(C))$, $(\mathcal{H}(F))$, $(\mathcal{H}(f_1))$ and
$(\mathcal{H}(f_2))$, the result in Theorem \ref{theor-main} also holds true with
$\overline{\mathrm{co}}\,F$ in place of $F$. Take (by Theorem \ref{theor-main} with
$\overline{\mathrm{co}}\,F$ in place of $F$) any absolutely continuous solution $y(\cdot):I \to H$ of
$(\mathcal{P}_{f_1,f_2}(x_0,\overline{\mathrm{co}}\,F))$ with initial condition $y(T_0)=x_0$, i.e.,
$$
  \begin{cases}
	  -\dot{y}(t) \in N_{C(t)}y(t)+f_1(t,y(t))+\int_{T_0}^tf_2(t,s,y(s))\,ds+
		\overline{\mathrm{co}}\,F(t,y(t)) \;\,
		\text{a.e.}\;t\in I \\
		y(T_0)=x_0.
	\end{cases}
$$
By the analysis yielding to \eqref{eq-ConsSol} and by Theorem \ref{theor-main} there is some Lebesgue  measurable selection $z: I \to H$ of $t\mapsto \overline{\mathrm{co}}\,F(t,y(t))$ such $y(\cdot)$ is an absolutely continuous solution of
$(\mathcal{P}_{f_1,f_2}(x_0,z))$, i.e. of the differential inclusion
$$
  \begin{cases}
	  -\dot{y}(t) \in N_{C(t)}y(t)+f_1(t,y(t))+\int_{T_0}^tf_2(t,s,y(s))\,ds+ z(t) \;\,
		\text{a.e.}\;t\in I \\
		y(T_0)=x_0.
	\end{cases}
$$
With  Theorem \ref{theor-main} at hands as furnished in the statement of this
Theorem \ref{theor-main}, it is not difficult to see that anyone of the methods in Theorem 4.1 of Tolstonogov \cite{Tolst2017} or in Theorem 4.4 of Castaing and Saidi \cite{Cast-Said} shows the following.  There are sequences $(z_n)_n$ of measurable selections of $\Gamma_{\gamma}$ converging weakly to $z$ in $L^1(I,H)$ and such that, denoting (as above)
$\zeta_{z_n}$ the absolutely continuous solution of $(\mathcal{P}_{f_1,f_2}(x_0,,z_n))$ with initial condition $\zeta_{z_n}(T_0)=x_0$, the inclusion $z_n(t) \in F(t,\zeta_{z_n}(t))$ holds a.e. $t\in I$ for each $n\in \N$ and the sequence $(\zeta_{z_n}(\cdot))_n$ converges uniformly on $I$ to $y(\cdot)$. In particular, any absolutely continuous solution of $(\mathcal{P}_{f_1,f_2}(x_0,\overline{\mathrm{co}}\,F))$ is the uniform limit in $\mathcal{C}(I,H)$ of a sequence of absolutely continuous solutions of
$(\mathcal{P}_{f_1,f_2}(x_0,F))$.


\begin{thebibliography}{10}
	\bibitem{Adly-Nacr-Thib} S. Adly, F. Nacry, L. Thibault, Discontinuous sweeping process with prox-regular sets, ESAIM Control Optim. Calc. Var. 23 (2017), 1293-1329.
	\bibitem{Bouach_Haddad_Thibault} A. Bouach, T. Haddad, L. Thibault, Nonconvex integro-differential sweeping process with applications, SIAM J. Control Optim. 60 (2022), 229-238.
	\bibitem{Boua-Hadd-Thib-JOTA} A. Bouach, T. Haddad, L. Thibault, On the discretization of truncated integro-differential sweeping process and optimal control, J. Optim. Theory Appl. 193 (2022), 785-830.
	\bibitem{Boua-Hadd-Thib-COT} A. Bouach, T. Haddad, L. Thibault, Integro-differential sweeping process approach to frictionless contact and integro-differential complementary problems, Commun. Optim. Theory
(2023).
	\bibitem{Boun-Thib} M. Bounkhel, L. Thibault, Nonconvex sweeping process and prox-regularity in Hilbert space, J. Nonlinear Convex  Anal. 6( 2005), 359-374.
	\bibitem{Bren} Y. Brenier, W. Gangbo, G. Savare, and M. Westdickenberg, Sticky particle dynamics
with interactions, J. Math. Pures Appl. 99 (2013), 577-617.
	\bibitem{Brog-Tanw} B. Brogliato, A. Tanwani, Dynamical systems coupled with monotone set-valued operators:
formalisms, applications, well-posedness, and stability, SIAM Rev. 62 (20230), 3-129.
	\bibitem{Cast-Said} C. Castaing, S. Saidi, Lipschitz perturbation to evolution inclusion driven by time-dependent maximal monotone operators, Topol. Methods Nonlinear Anal. 58 (2021), 677-712.
	\bibitem{Cast-Thib} C. Castaing, L. Thibault, Various perturbations of the sweeping process, J. Convex Anal. 30 (2023), 659-742.
	\bibitem{Cast-Vala} C. Castaing, M. Valadier, Convex Analysis and Measurable Multifunctions, Lecture  Notes in Mathematics, Vol. 580 Springer-Verlag, Berlin 1977.
	\bibitem{Colo-Gonc} G. Colombo, V. V. Goncharov, The sweeping processes without convexity, Set-Valued Anal. 7 (1999), 357-374.
	\bibitem{Colo-Koza} G. Colombo and C. Kozaily, Existence and uniqueness of solutions for an integral perturbation of Moreau's sweeping process, J. Convex Anal. 27 (2020), 227-236.
	\bibitem{Colo-Thib} G. Colombo, L. Thibault, Prox-regular sets and applications, Handbook of Nonconvex Analysis and Applications, pp. 99-182, D.Y. Gao, D. Motreanu Eds, International Press, Somerville (2010).
	\bibitem{Edmo-Thib-JDE} J. F. Edmond, L. Thibault, BV solution of nonconvex sweeping process with pertrbation, J. Differential Equations 226 (2026), 135-179.
	\bibitem{Edmo-Thib-MathProg} J. F. Edmond, L. Thibault, Relaxation of an optimal control problrem  involving a perturbed sweeping process, Math. Program. ser B 104 (2005), 347-373.
	\bibitem{Fili}  A. F. Filippov, Classical solutions of differential equations with multivalued right-hand side, SIAM J. Control 5 (1967), 609-621.
	\bibitem{Gaou-Hadd-Thib} S. Gaouir, T. Haddad, L. Thibault, Prox-regular integro-differential sweeping process, submitted.
	\bibitem{Ioff} A. D. Ioffe, Existence and relaxation theorems for unbounded differential inclusions, J. Convex Anal. 13 (2006), 353-362.
	\bibitem{Jour-Vilc} A. Jourani, E. Vilches, Positively $\alpha$-far sets and existence results for generalized perturbed sweeping processes, J. Convex Anal. 23 (2016), 775-821.
	\bibitem{Mans-Keci-Hadd} M. Mansour, I. Kecis, T. Haddad, Nonconvex integro-differential sweeping processes
involving maximal monotone operators, Hacet. J. Math. Stat. 52 (2023), 1677-1690.
	\bibitem{More} J. J. Moreau, Evolution problem associated with a moving convex set in a Hilbert space, J.
Differential Equations 26 (1977), 347-374.
 \bibitem{Nacr} F. Nacry , Truncated nonconvex state dependent sweeping process: implicit and semi-implicit
adapted Moreau’s catching up algorithms, J. Fixed Point Theory Appl. 20 (2018), Paper No 121.
 \bibitem{Nacr-Thib} F. Nacry, L. Thibault, BV prox-regular sweeping process with bounded truncated variation, Optimization 69 (2020), 1391-1437.
	\bibitem{Poli-Rock-Thib} R. A. Poliquin, R. T. Rockafellar, L. Thibault, Local differentiability of distance functions, Trans. Amer. Math. Soc. 332 (2000), 5231-5249.
	\bibitem{Thib-JDE} L. Thibault, Sweeping process with regular and nonregular sets, J. Differential Equations  193 (2003), 1-26.
	\bibitem{Thib-book1} L. Thibault, Unilateral Variational Analysis in Banach Spaces. Part I. General Theory,
	World Scientific, London-New Jersey-Singapore,  2023.
	\bibitem{Thib-book2} L. Thibault, Unilateral Variational Analysis in Banach Spaces. Part II. Special Classes of Functions and sets,
	World Scientific, London-New Jersey-Singapore,  2023.
	\bibitem{Tolst2017} A. A. Tolstogonov, Existence and relaxation of solutions for a subdifferential inclusion with unbounded perturbation, J. Math. Anal. Appl. 447 (2017), 269-288.
\end{thebibliography}
\end{document}